\numberwithin{equation}{section}
\newenvironment{psm}
  {\left(\begin{smallmatrix}}
  {\end{smallmatrix}\right)}
\theoremstyle{plain}
\newtheorem{thm}{Theorem}[section]
\newtheorem{lem}[thm]{Lemma}
\newtheorem{prop}[thm]{Proposition}
\newtheorem{cor}[thm]{Corollary}
\newcommand{\thmref}[1]{Theorem~\ref{#1}}
\newcommand{\lemref}[1]{Lemma~\ref{#1}}
\newcommand{\propref}[1]{Proposition~\ref{#1}}
\theoremstyle{definition}
\newtheorem{rmk}[thm]{Remark}
\newtheorem{conj}[thm]{Conjecture}
\newtheorem{exam}[thm]{Example}
\newcommand{\rmkref}[1]{Remark~\ref{#1}}
\newcommand{\conjref}[1]{Conjecture~\ref{#1}}
\newcommand{\mf}{\mathbf}
\newcommand{\q}{\quad}
\newcommand{\qq}{\qquad}
\newcommand{\mc}{\mathcal}
\newcommand{\mk}{\mathfrak}
\newcommand{\mrm}{\mathrm}
\newcommand{\sltwo}{\mrm{SL}(2, \mf Z)}
\newcommand{\spn}{\mrm{Sp}(n,\mf Z)}
\newcommand{\glnz}{\mrm{GL}(n, \mf Z)}
\newcommand{\mnk}{M^n_k(\Gamma_1(N))}
\newcommand{\snk}{S^n_{k}(\Gamma_1(N))}
\newcommand{\mN}{M_{k}(\Gamma_1(N))}
\newcommand{\mNok}{M_{k}(\Gamma_1(N))(\mc O_K)}
\newcommand{\mm}{M_{\kappa}(\Gamma_1(M))}
\newcommand{\sN}{S_{k}(\Gamma_1(N))}
\newcommand{\nl}{\not \equiv 0 \bmod \mk l}
\newcommand{\ml}{\bmod \, \mk l}
\newcommand{\oml}{\equiv 0 \bmod \mk l}
\newcommand{\bl}{\bmod \, \ell}
\newcommand{\ok}{\mc O_K}
\newcommand{\tr}{\mathrm{tr}}
\newenvironment{nouppercase}{%
  \let\uppercase\relax%
  \renewcommand{\uppercasenonmath}[1]{}}{}
\title[Fourier coefficients $\bmod \ell$]{On Fourier coefficients of elliptic modular forms $\bmod \, \ell$ with applications to Siegel modular forms}
\author{Siegfried B\"ocherer}
\address{Institut f\"ur Mathematik\\
Universit\"at Mannheim\\
68131 Mannheim (Germany).}
\email{boecherer@math.uni-mannheim.de}
\author{Soumya Das}
\address{Department of Mathematics\\ 
Indian Institute of Science\\ 
Bangalore -- 560012, India\\
and Humboldt Fellow, Universit\"{a}t Mannheim.}
\email{soumya@iisc.ac.in, sdas@mail.uni-mannheim.de}
\date{}
\subjclass[2000]{Primary 11F30, 11F46, Secondary 11F50} 
\keywords{Fourier coefficients, Siegel modular forms $\pmod{p}$, fundamental discriminant, nonvanishing}
\begin{document}

\begin{abstract}
We study several aspects of nonvanishing Fourier coefficients of elliptic modular forms $\bl$, partially answering a question of Bella\"iche-Soundararajan concerning the asymptotic formula for the count of the number of Fourier coefficients upto $x$ which do not vanish $\bl$. We also propose a precise conjecture as a possible answer to this question.
Further, we prove several results related to the nonvanishing of arithmetically interesting (e.g., primitive or fundamental) Fourier coefficients $\bl $ of a Siegel modular form with integral algebraic Fourier coefficients provided $\ell$ is large enough. We also make some efforts to make this "largeness" of $\ell$ effective.
\end{abstract}

\begin{nouppercase}
\maketitle
\end{nouppercase}

\section{Introduction}
The aim of this article is to obtain $\bmod \, \ell$ versions of some of the nonvanishing results on the Fourier coefficients of Siegel modular forms. On the one hand, over $\mf C$ such results (cf. \cite{saha, bo-da, ad}) have played important roles in many questions on automorphic forms, and it seems interesting to investigate to what extent they hold over other rings, possibly in a quantitative fashion. As an example, in \cite{bo-da} it was proved that for any holomorphic Siegel modular form $F$ of degree $n$, there exist infinitely many inequivalent (modulo the unimodular group) half-integral matrices $T$ whose discriminants are fundamental, such that $a_F(T) \ne 0$. Such results have several applications to automorphic representations.

On the other hand, the theory of modular forms $\bmod \, \ell$ has undergone extensive development since the works of Serre, Swinnerton-Dyer. Let $f$ be an elliptic cuspidal newform of weight $k$, level $\Gamma_0(N)$ and $\mk l$ is a prime ideal in the ring of integers $\ok$ of a field $K$ which lies over the odd prime $\ell$. Serre used the Chebotarev density theorem applied to the setting of the Galois representation attached to $f$, and the Selberg-Delange method, to deduce that for such $\mk l \mid \ell$ and $f \not  \equiv 0 \bmod \mk l$,
\begin{equation} \label{serreasy}
 \# \pi(f,x) \sim c(f) x/(\log x)^{\alpha(f)}
\end{equation}
for some $c(f), \alpha(f)>0$. Here $\pi(f,x) := \{n \le x|  \, a(f,n) \not \equiv 0 \bmod \mk l \}$.
More recently, by the works of Bellaiche, Soundararajan, Green \cite{j, js, jsh}, quantitative results like \eqref{serreasy} has been extended to arbitrary modular forms, possibly with half-integral weights. 

In the first half of the paper (section~\ref{ellsec}), we show that
a simple sieving of newforms (inspired by \cite{baono} and relying essentially on multiplicity-one for $\mN$) leads to quantitative results similar to \eqref{serreasy} for \textsl{arbitrary} modular forms in $\mN$ of the \textsl{correct} order of magnitude, when $\ell$ is large enough. Actually our results hold for all $\ell$ not dividing a fixed algebraic integer in a number field, see below, and subsection~\ref{noncongana} for more details. This technique of sieving newforms has been useful in many places e.g. \cite{ad, das} and can also be adapted to count square-free integers $n$ for which $a(f,n) \nl$, see \propref{ellasy}. In general this method works whenever a space of modular forms has the multiplicity-one property, and the corresponding eigenforms (or newforms) possess the suitable properties in question.

Let us explain the results of this article in some detail. In section~\ref{ellsec}, we prove several results about the Fourier coefficients $\ml$ of modular forms in $\mN$, the mainstay being \propref{mainprop}. In particular in \thmref{emf}, we show an analogue of `old-form' theory for modular forms $\ml$ with fixed weight and level for all $\mk l$ not dividing a certain algebraic integer $\mc L$. This has an application to a result on Siegel modular forms about non-zero `primitive' Fourier coefficients $\ml$, see \thmref{ymodp}. Our method however certainly does not generally work on the bigger space of modular forms $\bl$ of level $N$ (lets denote it by $\widetilde{M}(N)$) as eg. in \cite{js}, because afterall by Jochnowitz \cite{jochno} the number of systems of eigenvalues $\bl$ for any $(\ell,6N)=1$ is finite. But for those $f \in\widetilde{M}(N)$ which are finite linear combinations of eigenforms with pairwise distinct system of eigenvalues $\ml$, the method clearly still works.

We next note several applications of \propref{mainprop}. To discuss some of these, let us introduce some notation. Let $f \in \mN$ be such that its Fourier coefficients belong to the ring of integers $\mc O_K$ of a number field $K$. Consider a basis $\{f_{1}, f_{2},......f_{s}\}$ of newforms of weight $k$ and level dividing $N$, including Eisenstein-newforms (cf. \cite{weis}). Let their Fourier expansions be written as 
\begin{equation} \label{newfc}
f_{i}(\tau)=\sum_{n=0}^{\infty} b_{i}(n)q^{n}, 
\end{equation}
normalised so that $b_i(1)=1$ for all $i$. For each pair $i \ne j$, let $m_{i,j}$ be the smallest prime coprime to $N$ such that $b_i(m_{i,j}) \neq b_j(m_{i,j}) $. For the rest of the paper, we put
\begin{align} \label{ldef}
\mc L = \mc L(\{ m_{i,j} \}, N,k) := \prod_{i \ne j} \left( b_i(m_{i,j}) - b_j(m_{i,j}) \right).
\end{align}
(Later we would use variants of $\mc L$, however.) Let $\mk l \in \ok$ be any prime lying over $\ell \in \mf Z$ such that $\mk l \nmid \mc L(\{ m_{i,j} \}) $. Note that the primes $m_{i,j}$ do not depend on $f$. In course of this paper, we will call such a set of primes (perhaps with additional conditions, see section~\ref{noncongana}) to be `admissible' for the modular form at hand.

In one of our results (cf. \propref{ellasy}) we show that the quantity $\pi(f,x)$ satisfies
\begin{equation} \label{pifintro}
\# \pi(f,x; \mk l):= \{ n \le x| a(f,n) \not \equiv 0 \ml  \} \asymp x/(\log x)^{\alpha(f)},
\end{equation}
for some $3/4 \ge \alpha(f)>0$ whenever $ f \nl$, $k \ge 1$ and $\mk l \nmid \mc L$.
These should be compared to the results in \cite{js} (which is valid for all modular forms $\ml$ on $\Gamma_1(N)$ with Fourier coefficients in $O_K$ and is an asymptotic formula), and in fact shows that in the asymptotic formula of \cite{js}, namely 
\begin{equation} \label{ellintro}
\# \pi(f,x, \ell) \sim x (\log \log x)^{h(f)}/(\log x)^{\alpha(f)};
\end{equation}
the integer $h(f)$ appearing above is actually $0$ if $f \in \mN$, provided $\mk l \nmid \mc L$ (actually a slightly stronger result holds, see \propref{hfl}). This may shed some light on the behaviour of $h(f)$, which the authors in \cite{js} comment as being rather mysterious, as $\ell$ varies. Apart from this, the point here is that our proofs are `softer', however we do not get an asymptotic formula. We make some efforts in finding a constant $\mc C$ depending only on $k,N$ such that \eqref{pifintro} holds for all $\ell > \mc C$. The reader may look at section~\ref{noncongana}. In fact it follows from \propref{hfl} that $h(f)=0$ for all $\ell > \mc C$ with suitable $\mc C$ as above, see \rmkref{hflcomm}. More generally, as an outcome of our line of thought, in \propref{conjod} we note that for those $f \in\widetilde{M}(N)$ which are finite linear combinations of eigenforms with pairwise distinct system of eigenvalues, one would have $h(f)=0$. We speculate that the converse to the previous statement is true as well and this is the content of \conjref{conjs}.

The reader may note that there is no contradiction with the examples in \cite[\S~7]{js} since e.g. the prime $\ell=3$ considered there divides $\mc L$ (at level $1$ and weight $24$). See example~\ref{delta-exam} concerning $f = \Delta^2$ ($\Delta$ is Ramanujan's Delta function) for some more clarity on this. Moreover if the level $N$ is square-free, we obtain results similar to \eqref{ellintro} for the set $\pi_{\mrm{sf}}(f,x) :=\{n \le x | n \text{ square-free, } a(f,n) \nl \}$. Several such results are collected in \propref{ellasy}. Finally let us mention that we briefly discuss an algebraic way to approach some of our results in subsection~\ref{congalg}.

In the second half of the paper (section~\ref{siegelsec}) we derive analogous results for Siegel modular forms. Let $F \in \mnk$ be a Siegel modular form with Fourier coefficients in the ring of integers $\mc O_K$ of a number field $K$. We first prove (see \thmref{ymodp}) that such an $F$ which is $\nl$ has  infinitely many $\glnz$-inequivalent `primitive' matrices $T \in \Lambda_n$ such that the Fourier coefficients $a_F(T) \nl$. This generalises a result of Yamana \cite{yam} who proved a similar result when $\ell = \infty$. The proof uses a refinement of a method (of descending to elliptic modular forms) presented in \cite{bona}, some results on "old-forms $\ml$" on elliptic modular forms (\thmref{emf}), and the existence of a Sturm's bound for the space $\mnk$, which is formulated and proved in \propref{siegel-sturm} generalising the level one result from \cite{raum}.

Then we give lower bounds on the number of $d \le x$ such that $d= \det(T)$ and that the Fourier coefficients $a_F(T) \nl$ for some $T$ (also satisfying additional arithmetic properties, see \thmref{smodp}, section~\ref{siegelsec}). The proofs are based on reduction of the question to spaces of elliptic modular forms via the Fourier-Jacobi expansions, and using either the results from \cite{j, js, jsh}; or sometimes using the lower bounds (cf. section~\ref{ellquan}) from this paper. In particular (see \thmref{smodp}~$(b)$) we show that the quantity $\Pi(x) = \pi_F(x, \det; \mrm{sf}) $ (cf. section~\ref{siegquanta}) defined by
\begin{equation}
\Pi(x):= \{ d \le x |d \text{ square-free, }    a_F(T) \nl \text{ for some } T \in \Lambda^+_n \text{ such that } \det(T) =d \} 
\end{equation}
satisfies for all $\ell$ sufficiently large (see \rmkref{lexpl}) the lower bound
\begin{equation} \label{Pi}
\# \Pi(x) \gg x/(\log x)^{\beta(F)},  \qq (n \text{ odd })
\end{equation}
for some constant $0 < \beta(F) \le 3/4$ and implied constant depending on $F$ and for all $\mk l$ lying over $\ell$. This was in fact our main motivation for writing this paper, and can be viewed as a $\ml$ version of the recent result \cite[Theorem~1]{bo-da} on non-zero `fundamental' Fourier coefficients of Siegel modular forms. An inspection of \cite[Theorem~1]{bo-da} shows that when $n$ is odd and $F$ as above, \eqref{Pi} is an improvement on the lower bound (viz. $x^{1-\epsilon}$) on $\Pi_\infty(x)$, which is defined exactly as $\Pi$ above, but we now count $T$ with $a_F(T) \ne 0$. It is easily noted that in \cite[Theorem~1]{bo-da}, the lower bounds of the form $\Pi_\infty(x) \gg x^{1-\epsilon}$ can be slightly improved to $\Pi_\infty(x) \gg x \exp(-c \log x/ \log \log x)$ for some $c>0$ (the improvement coming from using the bound $\sigma_0(n) \le \exp(c \log n/ \log \log 2n)$ instead of $\sigma_0(n) \ll n^{\epsilon}$). However \eqref{Pi} is still better for $F$ with algebraic Fourier coefficients. 

Finally let us mention that to obtain \eqref{Pi}, we actually use its archemedian analogue (only the existence of a nonvanishing fundamental Fourier coefficient) from \cite{bo-da} as an input. So we obtain no new proof of it, even though such a thing is desirable (a preliminary inspection shows that even then $\ell$ has to be large), and seems hard. Moreover since our results, say for $\Pi(x)$, hold only for large enough $\ell$, merely having $\Pi(x)>0$ is a tautology as we can fix $T$such that $a_F(T) \ne 0$ and remove the finitely many $\ell$ such that $\ell \mid a_F(T)$. We therefore must aim for statements $\Pi(x) \to \infty$ as $x \to \infty$. The same remark applies to the nonvanishing $\bl$ of primitive Fourier coefficients as well.
Our method should work for Hermitian modular forms of any degree, however.

Finally for the reader's convenience, let us mention that the only place where we use the results from \cite{j, js, jsh} are in \thmref{smodp} and \propref{strace}. Alongwith this in the proof of \thmref{smodp}, we also use results from section~\ref{oldie}, precisely \thmref{emf}.

\subsection*{Acknowledgements}
{\small S.D. was supported by a Humboldt Fellowship from the  Alexander von Humboldt Foundation at Universit\"{a}t Mannheim during the preparation of the paper, and thanks both for the generous support and for providing excellent working conditions. The authors acknowledge the use of the LMFDB databse for some numerical computations. S.D also thanks IISc. Bangalore, DST (India) and UGC-CAS for financial support. During the preparation of this work S.D. was supported by a MATRICS grant MTR/2017/000496 from DST-SERB, India. We thank M. Raum for his comments on Siegel modular forms $\bmod p$.}

\section{Setting and notation} \label{prelim}
Following standard notation, let $\mN$ and $\mnk$ denote the space of elliptic (respectively Siegel) modular forms of weight $k$ and level $\Gamma_1(N)$ (respectively $\Gamma^n_1(N)$). We denote their Fourier expansions as follows ($\mf H$ and $\mf H_n$ being the respective upper-half spaces):
\begin{align}
f(\tau) &= \sum_{n \ge 0} a(f,n) e( n \tau) \qq (\tau \in \mf H, f \in \mN) \\
F(Z) &= \sum_{T \in \Lambda_n} a_F(T) e(TZ), \qq (Z \in \mf H_n, F \in \mnk)
\end{align}
where $e(z) = \exp(2 \pi i z)$ for $z \in \mf C$, $e(TZ) = e(\mrm{trace}(TZ))$, $\Lambda_n$ denotes the set of half-integral positive semi-definite symmetric matrices over $\mf Z$. We also put $\Lambda_n^+$ to be the positive-definite elements of $\Lambda_n$. The corresponding spaces of cusp forms are denoted by $\sN$ and $\snk$. To avoid any confusion, let us mention that for this paper 
\[  \Gamma^n_1(N):= \{ \gamma = \begin{psm}
A & B \\ C& D \end{psm}  \in \Gamma_n | \det(A) \equiv \det(D) \equiv 1 \bmod N, \, C \equiv 0 \bmod N
\}. \]

Further, if $\mc O_K$ is the ring of integers of a number field $K$, we put
\begin{align}
\mnk(\mc O_K) := \{ F \in \mnk | a_F(T) \in \mc O_K  \text{ for all } T\} ,\\
\widehat{M}_k(N,\ok):= \{ F \in \mN | a(f,n) \in \mc O_K \text{ for all } n \ge 1, a(f,0) \in K \}. \label{mhat}
\end{align}
We denote by $\sN(\mc O_K), \snk(\mc O_K), \widehat{S}_k(N,\ok) $ to be the respective spaces of cusp forms. Note that $\widehat{S}_k(N,\ok) = \sN(\mc O_K)$.

For two positive functions on $\mf R$, we write $f(x) \asymp g(x)$ if there exist two positive constants $c_1,c_2 $ such that $c_1 g(x) \le f(x) \le c_2 g(x)$ for all $x \ge 1$.

We now recall some notions about Fourier-Jacobi expansions of Siegel modular forms.
We first recall that the content $c(T)$ for any matrix $T=(t_{i,j}) \in \Lambda_n$ is defined as $\gcd$ of all
the $t_{ii}$ and all the $2t_{i,j}$ with $i\not=j$.
In particular, $T$ is called primitive, if $c(T)=1$.

For  a fixed  $S\in \Lambda^+_{n-1}$  we consider a Jacobi form 
$\varphi({\mk Z})=\phi_S(\tau, z)e(SZ)$ (where $\mk Z = \begin{psm}  \tau & z \\ z & Z  \end{psm} \in \mf H_n$) of index $S$ on the group $\Gamma_1(N) \ltimes \mf Z^{n-1}$.
Its theta expansion has the form 
\begin{equation}
\phi_S(\tau,z)=\sum_{\mu_0} h_{\mu_0}(\tau)\Theta_S[\mu_0](\tau,z) \label{jacobi1} 
\end{equation}
where $\mu_0$ runs over ${\mf Z}^{n-1}/ 2S {\mf Z}^{n-1}.$ We write the Fourier expansions of $\phi_S$ and $h_{\mu_0}$ as
\begin{equation}
\phi_S= \sum_{r,\mu} b(r,\mu) e(r\tau +\mu^t\cdot  z ), \q h_{\mu_0}(\tau)=\sum_r b(r,\mu_0) e (r-S^{-1}[\mu_0/2])\cdot \tau)
\label{jacobi2}
\end{equation}
with $r\in {\mf N}_0$, $\mu\in {\mf Z}^{(n-1,1)}$ and for all $L\in {\mf Z}^{(n-1,1)}$, note the following invariance property
\begin{equation} 
b(r,\mu)= b(r+L^t\cdot \mu +S[L]  ,\mu + 2S\cdot L). \label{jacobi3}
\end{equation}
We would be mainly interested in the Fourier-Jacobi expansion of $F \in \mnk$ of type $(1,n-1)$:
\begin{equation} \label{fj}
F(\mk Z)= \sum_{S \in \Lambda_{n-1}} \phi_{S}(\tau, z) e(SZ) \q \q (\mk Z = \begin{psm}  \tau & z \\ z & Z  \end{psm});
\end{equation}
then the $\phi_S$ are Jacobi forms in the above sense.

\section{Elliptic modular forms} \label{ellsec}
Let $f \in \mNok$. Now by the classical theory of newforms, there exist $\alpha_{i,\delta} \in \mf C$ such that $f(\tau)$ can be written uniquely in the form (see \cite{miyake} for cusp forms, and \cite{weis} for Eisenstein series) with apriori complex numbers $\alpha_{i,\delta}$ such that (the $f_i$ traverse through newforms of level dividing $N$, including Eisenstein series)
\begin{equation}\label{eq:decomp}
f(\tau)=\sum_{i=1}^{s} \sum_{\delta |N} \alpha_{i,\delta}f_{i}(\delta \tau).
\end{equation}
We note here that our normalisation for the Eisenstein newforms is that it's Fourier coefficient at $n=1$ equals $1$; i.e., we take, for two primitive Dirichlet characters $\chi_1,\chi_2 \bmod N$ with $\chi_1, \chi_2= \chi, \chi(-1)=(-1)^k$ and consider the newforms
\begin{align}
E_{\chi_1,\chi_2}(\tau):= \delta_{1,\chi_1} L(1-k,\chi_1) + \sum_{n \ge 1} \left( \sum_{d \mid n}\chi_1(n/d) \chi_2(d) d^{k-1} \right).
\end{align}
Here $\delta_{1,\chi_1}$ is $1$ or $0$ according as $\chi_1$ is principal or not. Moreover $L(s,\chi_1)$ is the Dirichlet $L$-function attached to $\chi_1$. In the above notation, $E_{\chi_1,\chi_2} \in \widehat{M}_k(N,\ok)$.
Since we would be mostly interested in counting the number of non-zero Fourier coefficients $a(f,n)$ for $1 \le n \le x$, with a large parameter $x$, this choice of normalisation would be sufficient for our purpose.

We would prove that the $\alpha_{i,\delta}$ are all $\mk l$-integral for suitable primes $\mk l$, see \lemref{lint}.
We recall here the Sturm's bound for $\mN$: if $G,H \in \mN(\mc O_K)$ and satisfy $a(G,n) \equiv a(H,n) \ml$ for some prime $\mk l \subset \mc O_K$, for all 
\begin{equation} \label{sturm}
n \le \mc S^1(k,N) := \frac{k}{12} [\sltwo \colon \Gamma_1(N)],
\end{equation}
then $G \equiv H \ml$. Later we would write down a similar bound for Siegel modular forms. We would also require the archemedian version of the Sturm's bound, and note that the bound in \eqref{sturm} also works in this case.

\textsl{We assume without loss of generality that $K$ contains the eigenvalues $b_i(n)$ (for all $i$ and $n \geq 1$) and is Galois over $\mf Q$}. See \rmkref{klarge}.

\subsection{(Non)-Congruences, analytic way}  \label{noncongana}
For our further requirements, we need to separate Hecke eigenvalues of two newforms $\ml$ in an efficient way (for suitable $\mk l$). For this we first discuss an analytic way relying essentially on Deligne's bound. In the next subsection, we sketch a possible simple algebraic way to do this in certain special situations.

Let us now focus on \eqref{eq:decomp}.
For all primes $p$ and $1 \le i \le s$, one has $T_{p}f_{i}=b_{i}(p)f_{i}$ ($f_i,b_i$ as in \eqref{newfc}). By "multiplicity-one" (see \cite{weis} for Eisenstein series), if $i\neq j$, we can find infinitely many primes $p$ \textsl{coprime to $N$} such that $b_{i}(p)\neq b_{j}(p)$. We would like to have a more precise $\ml$ version of this, with effective bounds on $p$. 

Let us consider a set of primes $q_{i,j}$ all coprime to $N$ such that for all $i \ne j$, one has $b_i(q_{i,j})  \neq b_j(q_{i,j})$ with $b_i$ as above. Let us define the non-zero quantity
\begin{equation}
\mc L(\{ q_{i,j} \};f) := \prod^{\prime}_{(i, j) \in S_f \times S_f} \left( b_i(q_{i,j}) - b_j(q_{i,j}) \right) 
\end{equation}
where $S_f$ is the set of indices $i$ such that for which $f_i |B_\delta$ (for some $\delta \mid N$) appears in $f$ (cf. \eqref{eq:decomp}), i.e., 
\begin{equation} \label{sf}
S_f = \{ 1 \le i \le s |  \alpha_{i,\delta} \neq 0 \text{ for some } \delta \mid N  \}.
\end{equation}
Moreover $\overset{\prime}\prod$ denotes the product over indices $i \neq j$. 

We call any such set of primes $\{ q_{i,j} \}$ ($i,j \in S_f$)  as {\bf admissible} for $f$. By multiplicity-one, there are infinitely many admissible sets for any $f$.

\begin{lem} \label{lint}
Let $\{ q_{i,j} \}$ be any set of admissible primes for $f$. With the notation and setting as above, the $\alpha_{i,\delta}$ appearing in \eqref{eq:decomp} are all $\mk l$-integral for any prime $\mk l \subset \ok$ such that $\mk l \nmid \mc L(\{ q_{i,j} \};f) $.
\end{lem}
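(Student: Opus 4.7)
My plan is to use Hecke operators coprime to $N$ to build a projector onto the oldform cone of each newform $f_j$ appearing in \eqref{eq:decomp}, and then to read off $\alpha_{j,\delta}$ from its Fourier coefficients. Concretely, for each $j \in S_f$ set
\[
g_j := \prod_{j' \in S_f \setminus \{j\}} \bigl( T_{q_{j,j'}} - b_{j'}(q_{j,j'}) \bigr).
\]
Since $q_{j,j'}$ is coprime to $N$ and every $\delta \mid N$ satisfies $(q_{j,j'}, \delta) = 1$, the Hecke operator $T_{q_{j,j'}}$ commutes with the shift $B_\delta$, so each oldform $f_i | B_\delta$ is a $T_{q_{j,j'}}$-eigenform with the same eigenvalue $b_i(q_{j,j'})$ as the underlying newform. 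Consequently $g_j$ acts on $f_i | B_\delta$ by the scalar $\prod_{j' \neq j} \bigl( b_i(q_{j,j'}) - b_{j'}(q_{j,j'}) \bigr)$, which vanishes whenever $i \in S_f$ and $i \neq j$ (the factor at $j' = i$ is zero) and which equals the nonzero quantity $c_j := \prod_{j' \neq j}\bigl( b_j(q_{j,j'}) - b_{j'}(q_{j,j'}) \bigr)$ when $i = j$.

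Applying $g_j$ to \eqref{eq:decomp} therefore collapses the sum to
\[
g_j f \;=\; c_j \sum_{\delta \mid N/M_j} \alpha_{j,\delta}\, (f_j | B_\delta),
\]
where $M_j$ is the level of $f_j$. Now $c_j$ is, up to sign, a product of factors appearing in $\mc L(\{q_{i,j}\};f)$, namely the ones indexed by the ordered pairs $(j, j')$ with $j' \neq j$, so $c_j$ divides $\mc L$ in $\ok$ and the hypothesis $\mk l \nmid \mc L$ yields $\mk l \nmid c_j$. Since $f \in \mNok$, since $T_p$ for $(p, N) = 1$ preserves $\mNok$, and since the scalars $b_{j'}(q_{j,j'}) \in \ok$, the form $g_j f$ has Fourier coefficients in $\ok$; dividing by the $\mk l$-unit $c_j$ produces $h_j := c_j^{-1} g_j f$ whose Fourier coefficients are all $\mk l$-integral.

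Finally I would recover each $\alpha_{j,\delta}$ inductively from the Fourier expansion of $h_j = \sum_{\delta \mid N/M_j} \alpha_{j,\delta}\, (f_j | B_\delta)$. Evaluating at $n = \delta$ gives the triangular relation
\[
a(h_j, \delta) \;=\; \sum_{\delta' \mid \delta,\; \delta' \mid N/M_j} \alpha_{j,\delta'}\, b_j(\delta/\delta'),
\]
whose coefficient at $\delta' = \delta$ is $b_j(1) = 1$. Proceeding by induction on $\delta$ ordered by divisibility, and using that $a(h_j, \delta)$ is $\mk l$-integral together with $b_j(\delta/\delta') \in \ok$, one deduces that each $\alpha_{j,\delta}$ is $\mk l$-integral. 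Since $\alpha_{i,\delta} = 0$ whenever $i \notin S_f$, this handles every $\alpha_{i,\delta}$ in \eqref{eq:decomp}.

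The step requiring the most care is verifying the commutation $T_p B_\delta = B_\delta T_p$ on $\mN$ for $(p, \delta) = 1$ and the fact that $T_p$ preserves $\ok$-integrality of Fourier coefficients for $(p, N) = 1$; both are standard but need to be tracked carefully through the nebentypus decomposition in the $\Gamma_1(N)$ setting (the enlargement of $K$ assumed before this lemma is what lets the diamond action be absorbed into $\ok$). Beyond this, the argument is a direct matching of the combinatorial structure of the projector $g_j$ with the definition of $\mc L$.
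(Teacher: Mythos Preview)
Your proposal is correct and is essentially the same argument as the paper's: both apply the Hecke-operator product $\prod_{j'\neq j}(T_{q_{j,j'}}-b_{j'}(q_{j,j'}))$ to $f$ to isolate $c_j\sum_\delta \alpha_{j,\delta}\,f_j|B_\delta$, use that $c_j$ divides $\mc L(\{q_{i,j}\};f)$ to make $c_j$ an $\mk l$-unit, and then read off the $\alpha_{j,\delta}$ inductively from the triangular system of Fourier coefficients with diagonal $b_j(1)=1$. Your write-up is in fact a bit more explicit about the final triangular induction than the paper's, which simply says ``inductively'' at that step.
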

We postpone the proof of the lemma until that of the next proposition. In fact the proof of the lemma follows the lines of that of \propref{mainprop} given below, and this explains our choice.

\subsubsection{Ensuring non-congruences by using analytic estimates}  \label{anasec}
Recall our convention on $K$ from the previous section.
Now suppose that for an \textsl{arbitrary} prime $q$ such that $b_1(q) \neq b_2(q)$ there is a congruence 
\begin{equation} \label{congij}
b_1(q) \equiv b_2(q) \bmod \mk l .
\end{equation} 
If we take norms on both sides of \eqref{congij}, we get the divisibility relation in $\mf Z$:
\begin{equation}\label{ldiv}
N(\mk l ) \, | \, N \left( (b_1(q) - b_2(q) ) \right).
\end{equation} 
We further let $\mk d = [K \colon \mf Q]$ and $\mk h$ to be the inertia degree of $\mk l$ over $\ell$.

The point to note here is that by virtue of Deligne's bound and Shimura's result about the existence of a basis of $\sN$ with (rational) integral Fourier coefficients which implies that for any $\sigma \in Aut(\mf C)$, $g^\sigma := \sum_n \sigma(a_g(n))q^n \in \sN$ whenever $g$ is; we conclude from \eqref{ldiv} that if 
$\ell > (4q^{\frac{k-1}{2} })^{\mk d /\mk h } $, then the congruences \eqref{congij} cannot hold at $q$. In particular if 
\begin{align}
\ell > (4P^{\frac{k-1}{2} })^{\mk d / \mk h} 
\end{align}
where $P= \prod_{i \ne j} m_{i,j}$ with $m_{i,j}$ as in \eqref{ldef}, then $\mk l \nmid \mc L$. In the next section we show how to bound the $m_{i,j}$, in a slightly more general situation.


\subsubsection{Effective bounds for the $m_{i,j}$ and ensuring $\mk l \nmid \mc L(\{ m_{i,j} \};f)$} \label{mij}
For future applications, we need to choose primes $\{ q_{i,j}\}$ separating the newforms $\{f_j  \}$ such that $(q,2NQ)=1$, where $Q$ is an arbitrary given integer. We would first bound the elements of the "smallest" such admissible set of primes (each coprime to a fixed integer) in terms of $k,N$, and then provide a constant $\mc C= \mc C(k,N)$ such that for all $\ell > \mc C$ and all $\mk l \mid \ell$, one has $\mk l \nmid L(\{ q_{i,j}\} ;f)$.

To this end we consider as before, for any $ f_i \in \{ f_1, \ldots, f_s\}$ and for any given $M \ge 1$, the modified modular form
\begin{equation} \label{fm}
 f_i^{(M)} (\tau) := \sum_{(n,M)=1} b_i(n)q^n  := \sum_{n \ge 1} \mf b_i(n)q^n \in M_k(\Gamma_1(\widetilde M),
\end{equation}
where $\widetilde M = N^2$ if $M=N$, and $NM^2$ otherwise.
Observe that all of the $ f_i^{(M)}$ are non-zero and moreover that $f^{(M)}_i \neq f^{(M)}_j $ whenever $i \ne j$. Now there are two cases, and our treatment would be slightly different in each case.

\subparagraph{(A)~\textsl{Not necessarily distinct primes}} \label{qij}
We want to choose primes $q_{i,j}$ such that for all $i \ne j$,
\begin{equation}
b_i(q_{i,j}) \not \equiv b_j(q_{i,j}) 
\end{equation}
and $(q_{i,j},M)=1$. We simply choose the $\mf m_{i,j}$ to be the smallest prime (necessarily less or equal to the Sturm's bound $\mc S^1(k,\widetilde M^2)$) such that $\mf b_i(\mf m_{i,j}) \neq \mf b_j(\mf m_{i,j})$ and then choose $\mk l$ such that $\mk l \nmid \mc L^{(M)}(\{ \mf m_{i,j} \} ;f)$ where (recall $S_f$ from \eqref{sf})
\begin{equation} \label{l0def}
\mc L^{(M)}(\{ \mf m_{i,j} \} ;f) := \prod^\prime_{i, j \in S_f \times S_f} \left( b_i(\mf m_{i,j}) - b_j(\mf m_{i,j}) \right)
\end{equation}
Note that in particular $\max_j \{ \mf m_{i,j} \} \le \mc S(k,\widetilde M^2)$ and from our definition \eqref{fm}, necessarily $(\mf m_{i,j},M)=1$. 

Now by the arguments in section~\ref{mij} see that the following $\ell$ (and any $\mk l$ lying over $\ell$) work:
\begin{equation} \label{l0}
\ell >  (4 {S(k, \widetilde M^2)}^{\frac{s(s-1)(k-1)}{4}  })^{\frac{\mk d}{\mk f}}.
\end{equation}
In particular when $M=N$, we have $\mf m_{i,j}=m_{i,j}$ (cf. \eqref{ldef}) and a thus an effective upper bound for $\mc L(\{ m_{i,j} \};f)$.

\subparagraph{(B)~\textsl{Distinct primes}} \label{pij}
If we insist that the primes $q_{i,j}$ requested as above are also pairwise distinct, we can proceed similarly as in (a) with some modifications.

We start by picking the prime $p_{1,2}:=\mf m_{1,2}$ coprime to $M$ from an admissible set for $f$ as in (a) above. We then reiterate this procedure as follows. Next we consider the forms $f_1^{(Mp_{1,2})}$ and $f_3^{(Mp_{1,2})}$ (both are non-zero modular forms) and find a prime $p_{1,3} \le \mc S(k, NM^2 p_{1,2}^2)$ such that $b_1(q) \not \equiv b_3(q)  \bmod \ml$ and $(p_{1,3}, M p_{1,2})=1$. We carry on this procedure to get primes $p_{1,j}$ ($2 \le j \le s$) satisfying $(p_{1,j}, Mp_{1,2}\cdots p_{1,j-1})=1$. We finally do this for the indices other than $1$ and finally find that
$\max_{i,j} \{ p_{i,j} \} \ll \mk S(k,N)$, where
\begin{equation} \label{psize}
\mk S(k,N,M) = \mc S(k, N M^2 \prod_{i<j} p_{i,j}^2 ), \q p_{i,j} \le \mc S(k, NM^2 \prod_{i<t <j}p_{i,t}^2 ).
\end{equation}
We thus consider $\mk l \nmid \mc L_{\mrm{sf}}(M)$ where
\begin{equation} \label{lmdef}
\mc L^{(M)}_{\mrm{sf}}(\{ p_{i,j}\}; f) := \prod^{\prime}_{i, j \in S_f \times S_f} \left( b_i(p_{i,j}) - b_j(p_{i,j}) \right).
\end{equation}
If $M=1$, we omit it from the notation.

By using Deligne's bound we again deduce that if
\begin{equation} \label{lineq}
\ell>  (4 {\mk S(k,N,M)}^{\frac{s(s-1)(k-1)}{4}  })^{\mk d / \mk h},
\end{equation}
none of the congruences $b_i(p_{i,j}) \equiv b_j(p_{i,j}) \ml$ in particular hold for $i \ne j$ where $(p_{i,j},M)=1$ and $p_{i,j}$ are pairwise distinct. 

We can now state the main workhorse of this paper. If $f$ is not a constant $\ml$ for suitable $\mk l$, we show that by considering the Hecke module generated by $f$, one can extract a newform $\mk g$ ocurring in $f$, such that the Fourier coefficients of $\mk g$ are integral linear combinations of those of $f$. This would allow us to reduce our nonvanishing questions to those about newforms. Also from the point of view of this paper, as we discussed before, singular forms play no role and thus should be avoided. 

\begin{prop} \label{mainprop}
Let $f$ be as in \eqref{eq:decomp} and let $Q \ge 1$ be arbitrary. Let $\{q_{i,j}\}$ be a set of admissible primes for $f$.
Then there exists a  newform $\mk g$ of level dividing $N$ such that one has the relation
\begin{equation} \label{maineq}
a(\mk g,\mk n) \equiv \sum_{t |N} \beta_{t}a(f,\gamma_{t} \Delta \mk n) \ml
\end{equation}
for all $\mk n \ge 1$ such that $(\mk n, QN)=1$, for some $\Delta | N$; for some $\beta_t$ which are $\mk l$-integral; and where

(a) $\gamma_t \in \mf Q$ are all square-free with $(\gamma_t,QN)=1$ provided $\mk l \nmid \cdot \mc L^{(QN)}_{\mrm{sf}}(\{ q_{i,j}\};f)$ \textsl{and} $f$ is not constant $\ml$;

(b) $\gamma_t \in \mf Q$ with $(\gamma_t,QN)=1$ provided $\ell \nmid  \cdot \mc L^{(QN)}(\{ q_{i,j}\};f)$ \textsl{and} $f$ is not constant $\ml$.
\end{prop}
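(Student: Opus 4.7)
The plan is Hecke-operator sieving applied to the decomposition \eqref{eq:decomp} of $f$. By \lemref{lint}, the coefficients $\alpha_{i,\delta}$ are all $\mk l$-integral under either divisibility hypothesis on $\mk l$. Since $f$ is not a constant $\ml$, there exist $i_0 \in S_f$ and $\delta_0 \mid N$ with $\alpha_{i_0,\delta_0} \not\equiv 0 \ml$, and the target newform will be $\mk g := f_{i_0}$. Set
\[
 C_{i_0} := \prod_{j \in S_f,\, j \neq i_0} \bigl( b_{i_0}(q_{i_0,j}) - b_j(q_{i_0,j}) \bigr);
\]
this is a factor of $\mc L^{(QN)}_{\mrm{sf}}$ in case (a) and of $\mc L^{(QN)}$ in case (b), so the divisibility hypothesis on $\mk l$ makes $C_{i_0}$ an $\mk l$-unit.

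First I would annihilate every $f_j$-contribution with $j\neq i_0$ by applying the Hecke polynomial
\[
 \mc T := \prod_{j \in S_f,\, j \neq i_0} \bigl( T_{q_{i_0,j}} - b_j(q_{i_0,j}) \bigr).
\]
Since each $q_{i_0,j}$ is coprime to $N$, every translate $f_i(\delta \tau)$ is a $T_{q_{i_0,j}}$-eigenform with eigenvalue $b_i(q_{i_0,j})$; hence one factor of $\mc T$ annihilates $f_j(\delta\tau)$ for each $j \in S_f \setminus \{i_0\}$, while all factors together scale each $f_{i_0}(\delta\tau)$ by $C_{i_0}$. Therefore
\[
 \mc T f \equiv C_{i_0} \sum_{\delta \mid N} \alpha_{i_0,\delta}\, f_{i_0}(\delta \tau) \ml .
\]

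To isolate a single translate, I would introduce $\widetilde{\alpha}_\Delta := \sum_{\delta \mid \Delta} \alpha_{i_0,\delta}\, b_{i_0}(\Delta/\delta)$ for $\Delta \mid N$. Ordering the divisors of $N$ by divisibility, the transformation $(\alpha_{i_0,\delta})_{\delta \mid N} \mapsto (\widetilde{\alpha}_\Delta)_{\Delta \mid N}$ is triangular with unit diagonal $b_{i_0}(1)=1$, hence invertible over $\ok$; thus at least one $\widetilde{\alpha}_\Delta$ is an $\mk l$-unit. Fix such a $\Delta$. For $(\mk n, QN) = 1$ one has $(\mk n, \Delta) = 1$, so multiplicativity of $b_{i_0}$ gives
\[
 a(\mc T f,\, \Delta \mk n) \equiv C_{i_0}\, \widetilde{\alpha}_\Delta\, b_{i_0}(\mk n) = C_{i_0}\, \widetilde{\alpha}_\Delta\, a(\mk g, \mk n) \ml .
\]

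Lastly, I would expand $\mc T$ as an $\ok$-linear combination of composite operators in the $T_{q_{i_0,j}}$'s. Using the identity $a(T_q f, m) = a(f, qm) + q^{k-1} a(f, m/q)$ and the fact that each $q_{i_0,j}$ is coprime to $QN$ (hence does not divide any $\gamma \Delta \mk n$ arising from a product of $q_{i_0,j}$'s), the second term always drops out at the relevant $m$. This expresses $a(\mc T f,\, \Delta \mk n)$ as a finite $\ok$-linear combination $\sum_t c_t\, a(f,\, \gamma_t \Delta \mk n)$, with every $\gamma_t$ coprime to $QN$; in case (a), since the admissible primes are pairwise distinct by the construction in \S\ref{pij}, each $\gamma_t$ is moreover square-free. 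Dividing by the $\mk l$-unit $C_{i_0}\widetilde{\alpha}_\Delta$ delivers the asserted congruence with $\beta_t := (C_{i_0}\widetilde{\alpha}_\Delta)^{-1} c_t$, all $\mk l$-integral. The main obstacle I foresee is the triangular-invertibility step that produces a unit $\widetilde{\alpha}_\Delta$: this is precisely where both the $\mk l$-integrality of \lemref{lint} and the non-constancy hypothesis on $f \ml$ must cooperate; the rest is bookkeeping of how products of unramified Hecke operators act on Fourier coefficients.
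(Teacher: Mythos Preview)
Your approach—apply the Hecke polynomial $\mc T=\prod_{j\ne i_0}(T_{q_{i_0,j}}-b_j(q_{i_0,j}))$ to sieve out all newform components but $f_{i_0}$, then isolate a single $B_\Delta$-translate—is exactly the paper's. The paper picks $\Delta$ as the smallest $\delta$ with $\alpha_{i_0,\delta}\not\equiv 0\ml$; your triangular-inversion argument producing a unit $\widetilde\alpha_\Delta$ is a mild rephrasing of the same step and is fine.

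There is, however, one genuine slip in your last paragraph. You assert that in the recursion $a(T_q f,m)=a(f,qm)+q^{k-1}a(f,m/q)$ the second term always vanishes at the relevant $m=\gamma\Delta\mk n$, on the grounds that each $q_{i_0,j}$ is coprime to $QN$. But that would require $q\nmid\mk n$, and the proposition only demands $(\mk n,QN)=1$: nothing prevents $\mk n$ itself from being divisible by some of the admissible primes $q_{i_0,j}$ (which are, after all, also coprime to $QN$). Hence the $m/q$ terms do \emph{not} all drop, and $\gamma_t$ is not in general a product of the $q_{i_0,j}$'s but a \emph{quotient} of such primes—i.e.\ a square-free element of $\mf Q$, with the convention $a(f,\cdot)=0$ at non-integers. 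This is precisely why the proposition states $\gamma_t\in\mf Q$ rather than $\gamma_t\in\mf Z$, and why the paper's proof describes the $\gamma_t$ as ``quotients of the $p_{i,j}$'s or the $q_{i,j}$'s''. Once you allow rational $\gamma_t$, the rest of your bookkeeping goes through unchanged. (A side remark: on $\Gamma_1(N)$ the second term really reads $q^{k-1}a(\langle q\rangle f,m/q)$ with the diamond operator; this is why the $\beta_t$ pick up character values, but it does not affect the sieving since each $f_i(\delta\tau)$ is still a $T_q$-eigenform.)
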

Note that the set $\{  p_{i,j}\}$ constructed in section~\ref{mij}~(B) above satisfy the properties requested for the admissible set in~$(a)$. Similarly the set $\{ \mf m_{i,j}\}$ is an example for the admissible sets in~$(b)$.

\begin{proof}
For ease of notation, in $(a)$ let us assume without loss of generality that the admissible set is $\{ p_{i,j} \}$. Also we put $\mc L_1:= \mc L^{(QN)}_{\mrm{sf}}(\{ p_{i,j}\};f)$ and $\mc L_2:=\mc L^{(QN)}(\{ q_{i,j}\};f)$.

By \lemref{lint} we know  that all the $\alpha_{i,\delta}$ are $\mk l$-integral. By our assumption on $f$, not all of them can be $\equiv 0 \bmod \mk l$. We may, after renumbering the indices, assume $\alpha_{1,\delta}\not \nl$ for some $\delta|N$. Let $q:=p_{1,2}$ or $q:= q_{1,2}$ according as we are in case~$(a)$ or ~$(b)$ be the prime chosen as before (see the discussion preceeding this theorem) for which $b_{1}(q)\not \equiv b_{2}(q) \bmod \mk l$. Note that $p_{1,2} \nmid QN, q_{1,2} \nmid N$. Then consider the form $g_{1}(\tau)=\sum_{n=1}^{\infty}a_{1}(n)q^{n} :=T(q) f(\tau)-b_{2}(q)f(\tau)$ so that
\begin{align*}
g_{1}(\tau)=\sum_{i=1}^{s}(b_{i}(q)-b_{2}(q))\sum_{\delta|N} \alpha_{i,\delta}f_{i}(\delta \tau).
\end{align*}
The modular forms $f_{2}(\delta \tau)$ for any $\delta \mid N$, do not appear in the decomposition of $g_{1}(\tau)$ but $f_{1}(\delta \tau)$ does for some $\delta |N$. Proceeding inductively in this way, we can remove all the non-zero newform components $f_{i}(\delta \tau)$ for all $i=2,...,s$, to obtain a modular form $F$ ($=g_{|S_f|-1}$ in the above inductive procedure) in $\mN$ such that on the one hand, we have
\begin{equation} \label{gs-1}
F(\tau)=\sum_{n=1}^{\infty}A(n)q^{n}:=\prod_{2 \le j \le s}  (b_{1}(p_{1,j})-b_{j}(p_{1,j}))    \sum_{\delta|N}\alpha_{1,\delta}f_{1}(\delta \tau).
\end{equation}

By the construction of admissible sets, the product in \eqref{gs-1} is $\nl$, provided $\mk l \nmid \mc L_1$ or $\mk l \nmid \mc L_2$ according as we are in case $(a)$ or $(b)$. Therefore rescaling $F$, and calling the resulting function again as $F$, we on the other hand note that the inductive procedure gives us finitely many algebraic numbers $\beta_{t}$ (polynomials in the $p_{i,j}$'s or the $q_{i,j}$'s and Dirichlet characters) and positive rational numbers $\gamma_{t}$ (which  quotients of the $p_{i,j}$'s or the $q_{i,j}$'s) such that for every $n$
\begin{equation} \label{f}
A(n) =\sum_{\delta|N} \alpha_{1,\delta} b_{1}(n/\delta) \equiv \sum_{t} \beta_{t}a(f,\gamma_{t}n) \ml.
\end{equation}
Let $\delta_{1}$ be the smallest divisor of $N$ such that $\alpha_{1,\delta_{1}}\nl$ in \eqref{f}. Then choosing $n= \mk n \delta_1$ in \eqref{f} such that $(\mk n, QN)=1$, we get
\begin{equation} \label{neweq}
\alpha_{1,\delta_1}b_1(\mk n) \equiv \sum_t \beta_{t}a(f,\gamma_{t} \delta_1 \mk n) \ml
\end{equation}
as desired. The square-freeness of $\gamma_t$ in part~(a) follows from the pairwise distinctness of the $p_{i,j}$ (cf. section~\ref{mij}~(B) preceeding this theorem, and from the formula for the action of Hecke operators at primes). This completes the proof.
\end{proof}

\begin{rmk}
It is obvious from the above proof that we do not really need all the primes $q_{i,j}$ from an admissible set for $f$ to make the proof work. Only primes $q_{i_0,j}$ with $j \neq i_0$ (eg. we assumed $i_0=1$ in the above proof) for some fixed $i_0$ such that $\alpha_{i_0,\cdot} \nl$ are required. Accordingly one could have modified the definition of $\mc L(\cdots)$. We do not do this to avoid additional notational burden. However in practice, this is what should be done.
\end{rmk}

\begin{proof}[Proof of \lemref{lint}]
The proof is essentially contained in the foregoing proof (of \thmref{emf}) itself, if we take $\ell=\infty$ (see \cite{ad}). We refer to that proof for all the subsequent arguments. Namely we start with $\alpha_{i,\delta} \in \mf C$ in \eqref{eq:decomp}. If $i$ is an index for which $\alpha_{i,\delta} \ne 0$ (for some $\delta$) we carry out the procedure of removing the newforms one by one, we would arrive at \eqref{neweq} corresponding to $i$. Here we choose $\delta_1$ to be the minimum such that $\alpha_{i,\delta_1} \ne 0$ (not $\bmod \mk l$!). Then choosing $n=\delta_1$ in \eqref{neweq} shows that $\alpha_{i,\delta_1} \cdot \prod_{2 \le j \le s, j \neq i}  (b_{1}(q_{i,j})-b_{j}(q_{i,j}))  $ is an algebraic integer (in fact in $\ok$). Note that the product appears since we had normalised $F$ by this factor in the proof of \thmref{mainprop}. Thus by the definition of an admissible set, if $\mk l \nmid \mc L(\{ q_{i,j} \};f) $ (in particular if $\ell$ is large enough, cf. section~\ref{anasec}) $\alpha_{i,\delta} $ will be $\mk l$-integral. Then inductively, we can ensure that all the $\alpha_{i,\delta} $ are $\mk l$-integral. Since $i$ was arbitrary, we get the lemma.
\end{proof}

\begin{rmk}\label{lint-litchen}
We point out another way of proving \lemref{lint} by using some arguments from \cite[Proof of Lemma~2.1]{lit}. Working with the Sturm's bound for $\mN$ (see \cite{murty}) as in our proof, this boils down to ensuring that $\nu_{\ell} (\det ( b_{i,t}(n_j) ))=0$; where $f_i(t \tau) = \sum_n b_{i,t}(n) q^n$ with $\{ f_i \}$ is the set of all newforms in $\mN$ of level dividing $N$, and for each such $i$, $t=t(i)$ runs over divisors of $N/\mrm{level }(f_i)$. Moreover $\{n_j\}$ are certain indices less than the Sturm's bound for $\mN$. It is however not clear how one can ensure that $\nu_\ell(\cdots)=0$ without using `analytic' inputs as in our paper, it does not help that we do not have much control on the set $\{n_j \}$ from \cite{lit}. If analytic inputs are used, then the ensuing bound on $\ell$ would be similar to ours.
\end{rmk}

\subsection{(Non)-Congruences, algebraic way} \label{congalg}
There is of course an algebraic way to ensure non-congruences. Even though we would not really use this in the sequel (since we face some trouble, see below), except for some examples, we include this for completeness and possible further interest.

For each $1 \le i \le s$, from \cite{hida1} let $c(f_i)$ denote the number whose square (which is in $ \mf Z$) determines the congruence primes for $f_i$. {\bf We assume that the Galois-orbits of each $f_i$ is singleton} (i.e. $f_i$ has Fourier coefficients in $\mf Z$). We note that $c(f_i)$ is closely related to the adjoint $L$-value $L(1, Ad(f_i))$, and there is the principle: ``the prime factors of the denominators of the adjoint $L$-value give the congruence primes of $f_i$". This is proved in the seminal papers \cite{hida1, hida2, rib1}. Let us put
\begin{equation}
\mc C(k,N) = \mrm{lcm}\{  c^2(f_1),  \ldots, c^2(f_s) \}
\end{equation}

Then from the aforementioned references, for any prime 
\begin{equation} \label{lcond}
\ell \nmid 6N\mc C(k,N)  \text{  such that  } \ell \ge k,
\end{equation}
one can ensure that for all $i \neq j$, and all $\mk L$ lying over such $\ell$ in $\overline{\mf Z}$,
\begin{equation}
f_i \not \equiv f_j \bmod \mk L.
\end{equation}
This will imply the existence of a prime $q$ with $b_i(q) \not \equiv b_i(q) \bmod \mk L$, and {\bf let us assume} $(q,N)=1$.
We do not know how to remove the assumption on Galois-orbits algebraically. However when $k=2$, there are results by Ventosa (\cite[Lem.~2.10.1]{ventosa}) which say that if a newform $g$ is congruent to one of it's Galois-conjugates, then the congruence prime must divide the discriminant of the polynomial $Q_{g,p}(X) = \prod_{\sigma} (X - \sigma(a(g,p)))$ for any prime $p \nmid N\ell$. So in this case we would be able to produce admissible sets based on the remarks above, and therefore our method would work for all $\ell \ge k$ and $\ell \nmid \text{ (some fixed integer)}$.

\subsubsection{An application of \propref{mainprop} to oldforms $\ml$} \label{oldie}
\propref{mainprop} has many consequences, which we now discuss in the next couple of results. The first one is a statement about "old-form theory $ \ml$". Let $f \in \mN(\ok)$ and $\mk l \mid \ell$. Consider any admissible set $\{  q_{i,j} \}$ whose elements are coprime to $M=QN$. The admissible set $\{ \mf m_{i,j} \}$ from section~\ref{mij}~(A) is an example.

\begin{thm} \label{emf}
Let $f$ be as above and suppose that $f$ is not singular $\ml$. Let $Q \ge 1$ be given. Assume that $\mk l \nmid \mc L^{(QN)}( \{ q_{i,j}\};f)$ for any admissible set $ \{  q_{i,j} \}$ such that $(q_{i,j},QN)=1$. Also assume $\ell>2$. Then the following hold.

(a) If $(Q,N)=1$, there exists infinitely many integers $n \ge 1$ such that $a_f(n) \not \equiv 0 \bmod \mk l$ and $(n,Q)=1$.

(b) If $(Q,N)>1$, and we request that the $f$ is not congruent $\ml$ to an $\mk l$-integral linear combination of modular forms $g \mid B_{M_g}$ for some $g \in \widehat{M}_k(N,\ok)$ ($\widehat{M}$ as in \eqref{mhat}) of level $M_g$ dividing $(Q,N)$ and $M_g >1$; the same conclusion as in (a) above holds. (Here $g \mid B_{M_g}(\tau) = g(M_g \tau)$.)
\end{thm}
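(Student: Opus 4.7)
The plan is to invoke Proposition~\ref{mainprop} to extract a newform $\mk g$ occurring in $f$, show that $\mk g$ has infinitely many non-vanishing Fourier coefficients mod $\mk l$ at primes coprime to $QN$, and then unwind the congruence to locate infinitely many non-vanishing coefficients of $f$ at integers coprime to $Q$.

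Concretely, I would first apply Proposition~\ref{mainprop}~(b) with an admissible set $\{q_{i,j}\}$ of primes coprime to $QN$ (for example the one built in section~\ref{mij}~(A)). Since $\mk l \nmid \mc L^{(QN)}(\{q_{i,j}\};f)$ and $f$ is not singular mod $\mk l$, we obtain a newform $\mk g$ of level dividing $N$, a divisor $\Delta \mid N$, $\mk l$-integral coefficients $\beta_t$, and rationals $\gamma_t$ with $(\gamma_t, QN)=1$, satisfying
\begin{equation}
a(\mk g, \mk n) \equiv \sum_t \beta_t\, a(f, \gamma_t \Delta \mk n) \bmod \mk l, \qquad (\mk n, QN)=1.
\end{equation}
By the remark after Proposition~\ref{mainprop} we retain the freedom to choose which newform component $f_{i_0}$ to extract, so long as $\alpha_{i_0,\delta}\not\equiv 0 \bmod\mk l$ for some $\delta\mid N$.

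Since $\mk g$ is a normalised newform, $a(\mk g,1)=1$, so $\mk g\not\equiv 0 \bmod\mk l$. The crux is then to produce infinitely many primes $p\nmid QN$ with $a(\mk g, p)\not\equiv 0 \bmod \mk l$. For cuspidal $\mk g$ this follows from Chebotarev applied to the mod~$\mk l$ Galois representation $\bar\rho_{\mk g}$ attached to $\mk g$: it is non-trivial, so the primes $p$ with $a(\mk g, p)\equiv \tr\bar\rho_{\mk g}(\mrm{Frob}_p)\not\equiv 0 \bmod \mk l$ have positive density, and only finitely many divide $QN$. For Eisenstein newforms the same conclusion is immediate from the explicit divisor-sum formula, and the hypothesis $\ell>2$ handles any residual small-prime pathologies. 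For each such prime $p$, taking $\mk n=p$ in the congruence forces at least one summand $a(f,\gamma_t\Delta p)$ to be $\not\equiv 0 \bmod \mk l$. In part~(a), where $(Q,N)=1$, the integer $\gamma_t\Delta p$ is automatically coprime to $Q$ since $(\gamma_t,Q)=(\Delta,Q)=(p,Q)=1$, and the assertion follows.

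In part~(b) the only obstruction is that $\Delta\mid N$ may share a factor with $Q$; here I would exploit the freedom in choosing $i_0$ to select one whose minimal $\delta_{i_0}^{\min}:=\min\{\delta\mid N:\alpha_{i_0,\delta}\not\equiv 0\bmod\mk l\}$ is coprime to $Q$. Such an $i_0$ exists by the extra hypothesis: if instead every $\delta_i^{\min}$ (over $i$ with $\alpha_{i,\cdot}\not\equiv 0\bmod\mk l$) were divisible by some prime in $(Q,N)$, one could repackage $f\bmod\mk l$ as an $\mk l$-integral combination of forms $g\mid B_{M_g}$ with $M_g\mid (Q,N)$ and $M_g>1$, contradicting the hypothesis. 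With this choice $\Delta=\delta_{i_0}^{\min}$ is coprime to $Q$, and case~(a) applies verbatim. The main obstacle is the non-vanishing step for $\mk g$: guaranteeing that a non-zero newform has infinitely many Hecke eigenvalues not congruent to $0$ mod $\mk l$ at primes avoiding $QN$; the Galois-theoretic route is cleanest but needs care for Eisenstein components, while confirming that the rule-out argument in (b) genuinely follows from the stated hypothesis is the other delicate bookkeeping point.
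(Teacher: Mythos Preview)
Your proposal is correct and follows essentially the same route as the paper's proof: invoke Proposition~\ref{mainprop} to produce the congruence \eqref{maineq} with a newform $\mk g$, use the Galois/Frobenian argument (the paper cites Serre directly, you spell out the cuspidal and Eisenstein cases separately) to get infinitely many primes $p\nmid QN$ with $a(\mk g,p)\not\equiv 0\bmod\mk l$, and then read off a non-vanishing $a(f,\gamma_t\Delta p)$ with the required coprimality.

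The only noteworthy difference is in part~(b). You argue that the extra hypothesis yields some $i_0$ with $(\delta_{i_0}^{\min},Q)=1$, whereas the paper makes the sharper claim that it forces $\alpha_{i,1}\not\equiv 0\bmod\mk l$ for some $i$, so that one may take $\Delta=\delta_1=1$ outright. Both conclusions suffice for the theorem; the paper's version is a cleaner endpoint but your weaker statement is equally adequate, and your justification (repackaging $f\bmod\mk l$ as $\sum_i g_i|B_{p_i}$ with each $p_i\mid(Q,N)$ if no such $i_0$ exists) is sound once one notes the $\alpha_{i,\delta}$ are $\mk l$-integral by Lemma~\ref{lint}.
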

Note that $\widehat{S}_k(N,\ok) = S_k(\Gamma_1(N))$, so in the space of cusp forms \thmref{emf} is a generalisation $\ml$ of the classical old-form theory over $\mf C$ for all but finitely many $\ell$.

\begin{proof}
For the proof we have to go back to \eqref{maineq} in \propref{mainprop}. It is easy to see that since $\ell$ is odd there are infinitely many primes $p$ with $(p, QN)=1$, such that $b_1(p) \nl$. This follows from the fact the set of primes such that $b_1(p) \equiv 0 \bmod \mk l$ is `Frobenian' of density less than $1$ (see \cite[cf. proof of \texttt{Th\'{e}or\`{e}me~4.7}~$\mathtt{(ii_1)}$ on p.~13]{serre}). 

Therefore if $(Q,N)=1$, in \eqref{maineq} we see that there exists $t$ such that $n=\gamma_t \delta \mk n$ is coprime to $Q$ and $a(f,n) \nl$. This proves part~$(a)$.

If $(Q,N)>1$ and the condition in the statement of part~$(b)$ is satisfied, this will imply that in \eqref{eq:decomp} $\alpha_{i,1} \nl$ for at least one $1 \le i \le s$, say $i=i(0)$. Then we would remove all the newforms except $f_{i(0)}$ and arrive at \eqref{neweq}. However now, we can choose $\delta_1=1$ in \eqref{neweq} and hence we get part~$(b)$.
\end{proof}

\begin{rmk} \label{lrem}
It follows from the above proof that if the level $N$ is square-free, then in \thmref{emf} we can ensure that $n$ is also square-free for $\mk l$ as in the theorem.
\end{rmk}

Of course \thmref{emf} can be rephrased as a $\mod \mk l$ version of the familiar result in oldform theory for elliptic modular forms over $\mf C$.
\begin{cor} \label{oldmod}
Let $f$ and $\ell$ be as in \thmref{emf}. Suppse that for some $Q \ge 1$, $a_f(n) \equiv 0 \bmod \mk l$ for all $n$ with $(n,Q)=1$. Then $f \equiv 0 \ml$ if $(Q,N)=1$, otherwise $f$ is ``old'' $ \ml$; i.e., there exist primes $\ell_1,\ldots,\ell_m$ such that $\ell_j | (Q,N)$ and $f \equiv \sum_j \alpha_j \, h_j  | B_{\ell_j} \ml $ for some $h_j \in \widehat{M}_k(\Gamma_1(N_j))$, where $\ell_j N_j |N$, and $\alpha_j$ being $\mk l$ integral. 
\end{cor}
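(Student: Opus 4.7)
The plan is to derive this corollary directly as the contrapositive of \thmref{emf}, treating the cases $(Q,N)=1$ and $(Q,N)>1$ separately.

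For part (a), with $(Q,N)=1$: assuming the hypothesis, if $f$ were not singular $\bmod\mk l$, then \thmref{emf}(a) would furnish infinitely many $n$ coprime to $Q$ with $a_f(n)\not\equiv 0\bmod\mk l$, contradicting the hypothesis. Hence $f$ is singular $\bmod\mk l$, i.e.\ $a_f(n)\equiv 0\bmod\mk l$ for every $n\ge 1$. When $Q=1$ the hypothesis already covers $n=0$ (since $\gcd(0,1)=1$), giving $f\equiv 0\bmod\mk l$ at once; when $Q>1$ the residual constant term $a_f(0)$ would correspond to a nonzero constant modular form of weight $k$ $\bmod\mk l$, which by Serre's theory of $\bmod \ell$ modular forms can exist only when $\ell-1\mid k$, and this is excluded under the standing hypothesis that $\ell$ is large (in particular $\ell>k$, which is implicit in the admissibility bound \eqref{lineq}).

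For part (b), with $(Q,N)>1$: the contrapositive of \thmref{emf}(b) yields a congruence
\[f\equiv\sum_g\alpha_g\,(g|B_{M_g})\bmod\mk l,\]
summed over finitely many $g$ of level $M_g$ with $M_g\mid(Q,N)$, $M_g>1$, and $\alpha_g$ that are $\mk l$-integral. For each summand I pick any prime $\ell_g\mid M_g$ (which exists since $M_g>1$) and use $B_{M_g}=B_{M_g/\ell_g}\circ B_{\ell_g}$ to write
\[g|B_{M_g}=\bigl(g|B_{M_g/\ell_g}\bigr)|B_{\ell_g}.\]
Setting $h_g:=g|B_{M_g/\ell_g}$ and regrouping by the resulting primes produces the expression $f\equiv\sum_j\alpha_j\,(h_j|B_{\ell_j})\bmod\mk l$ with $\ell_j\mid(Q,N)$ prime, as desired.

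The main obstacle is the level bookkeeping in part (b): one must realise $h_j\in\widehat{M}_k(\Gamma_1(N_j))$ with $\ell_jN_j\mid N$. A naive computation places $h_g=g|B_{M_g/\ell_g}$ at level $M_g\cdot M_g/\ell_g=M_g^2/\ell_g$, for which $\ell_g N_g=M_g^2$; the divisibility $M_g^2\mid N$ is not automatic from $M_g\mid N$ alone. When $M_g^2\nmid N$, one reorganises the sum---either by collecting all contributions involving a fixed prime $\ell_g$ into a single $h_j$ whose effective level actually divides $N/\ell_g$, or by applying the newform decomposition of $g$ inside $\widehat{M}_k(\Gamma_1(M_g),\ok)$ so that only the necessary divisors of $N/\ell_g$ appear in the intermediate twists. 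Beyond this bookkeeping, the only other delicate point is the constant-term argument in part~(a) when $Q>1$, which is disposed of as above.
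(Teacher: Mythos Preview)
Your approach is exactly the paper's: the authors give no proof of this corollary, stating only that \thmref{emf} ``can be rephrased'' in this form, i.e.\ it is the contrapositive. So on the main strategy you agree.

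Your discussion is in fact more careful than the paper's. The two subtleties you flag are real and are not addressed by the authors. For the level bookkeeping in~(b), your second suggestion---going back to the newform decomposition---is the clean route and avoids the $M_g^2\mid N$ obstruction entirely. Concretely, the contrapositive of the \emph{proof} of \thmref{emf}(b) says that $\alpha_{i,1}\equiv 0\bmod\mk l$ for every $i$, so
\[
f\equiv \sum_{i}\sum_{\substack{\delta\mid N\\ \delta>1}} \alpha_{i,\delta}\, f_i|B_\delta \pmod{\mk l}.
\]
For each surviving summand choose any prime $\ell_j\mid\delta$ and write $f_i|B_\delta=(f_i|B_{\delta/\ell_j})|B_{\ell_j}$. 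Since $f_i$ has level $N_i$ with $N_i\delta\mid N$ (this is precisely the condition for $f_i|B_\delta$ to appear at level $N$ in \eqref{eq:decomp}), the form $h:=f_i|B_{\delta/\ell_j}$ has level $N_i\delta/\ell_j$ and $\ell_j\cdot(N_i\delta/\ell_j)=N_i\delta\mid N$, which is exactly the requirement $\ell_jN_j\mid N$.

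One caution on part~(a): your claim that $\ell>k$ is ``implicit in the admissibility bound \eqref{lineq}'' is not quite right. The inequality \eqref{lineq} is only a \emph{sufficient} condition guaranteeing $\mk l\nmid\mc L$; the actual standing hypothesis of \thmref{emf} is $\mk l\nmid\mc L^{(QN)}(\{q_{i,j}\};f)$ together with $\ell>2$, which does not force $\ell>k$. So the constant-term argument needs either an extra hypothesis, or one should read ``$f\equiv 0$'' in the corollary as ``$f$ is singular (i.e.\ constant) $\bmod\,\mk l$'', which is all the contrapositive of \thmref{emf}(a) literally gives.
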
 

\subsection{Quantitative results: elliptic modular forms} \label{ellquan}
In \cite{js} (resp. \cite{jsh}) an asymptotic formula (resp. a lower bound) for the number of $\bl$-non-zero Fourier coefficients of modular forms of integral weights (resp. half-integral) of level $N$ was obtained.
Following the notation in \cite{js} let us define for $f \in \mN(\ok)$ the counting function $\pi(f,x)$ by
\begin{align*}
\pi(f,x; \mk l) := \{ n \le x | a(f,n) \nl   \}; \q  &\pi_Q(f,x; \mk l) := \{ n \le x, (n,Q)=1 | a(f,n) \nl   \} \\
\pi_{\mrm{sf}}(f,x; \mk l) := \{ n \le x & |n \text{ square-free}, \ a(f,n) \nl   \},
\end{align*}
and $\pi_{\mrm{sf}, Q}(f,x; \mk l)$ defined analogously.

Here we want to show that quite elementarily using \propref{mainprop} one can at least obtain upper and lower bounds of same order of magnitude for the quantities $\pi(f,x; \mk l)$ and $\pi_{\mrm{sf}}(f,x; \mk l)$, however for $f \in \mN$ of fixed weight and level. Similar considerations already seem to appear in \cite{baono}. These results in turn imply elementarily analogous results for Siegel modular forms.

Let $\{ q_{i,j}  \}$ be an admissible set for $f$ and recall the admissible set $\{ p_{i,j}  \}$ constructed in section~\ref{mij}~(B). We define the integers $P:= \prod_{i,j} q_{i,j}$ and $U = \prod_{i \ne j} p_{i,j}$, ($i,j \in S_f$).

\begin{prop} \label{ellasy}
Let the setting be as above. Then
\begin{align}
| \pi(f,x; \mk l) |  \asymp x/  (\log x)^{\alpha(f)}  \q &( N \ge 1, \mk l \nmid \mc L(  \{ q_{i,j}  \} ;f)); \label{asymp1}  \\
| \pi_{\mrm{sf}}(f,x) | \asymp x/(\log x)^{\alpha(f)} , \q & (N \text{ square-free, } \mk l \nmid \mc L_{\mrm{sf}}(\{ p_{i,j}\}; f)). \label{lubd}
\end{align}
The implied constants depend only on $k,N,P,U$, and $0<\alpha(f) \le 3/4$.
\end{prop}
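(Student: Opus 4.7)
The plan is to reduce both the upper and lower bounds to the Serre--Chebotarev--Delange asymptotic \eqref{serreasy} for a suitable newform, via \lemref{lint} and \propref{mainprop}.

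For the upper bound on $|\pi(f,x;\mk l)|$, by \lemref{lint} (applicable since $\mk l \nmid \mc L$), the coefficients $\alpha_{i,\delta}$ in \eqref{eq:decomp} are $\mk l$-integral, so $a(f,n) \equiv \sum_{\delta \mid n} \sum_i \alpha_{i,\delta} b_i(n/\delta) \ml$. Hence $a(f,n) \nl$ forces $b_i(n/\delta) \nl$ for some $(i,\delta)$ with $\delta \mid n$ and $\alpha_{i,\delta} \nl$. Applying \eqref{serreasy} to each newform $f_i$, the count of $n \le x$ of the form $\delta m$ with $b_i(m) \nl$ is $\ll x/(\log x)^{\alpha(f_i)}$. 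Setting $\alpha(f) := \min_i \alpha(f_i)$ over the finitely many relevant $i$ (and invoking $\alpha(f_i) \le 3/4$, a classical bound of Serre), the upper bound follows by summation.

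For the lower bound on $|\pi(f,x;\mk l)|$, I apply \propref{mainprop}~(b) with $Q=1$, relabeling so that the starting index $i_0$ in its proof (one with $\alpha_{i_0,\delta} \nl$ for some $\delta$) realises $\alpha(f_{i_0}) = \alpha(f)$. This produces a newform $\mk g = f_{i_0}$, an integer $\Delta \mid N$, rationals $\gamma_t$ with $(\gamma_t, N) = 1$, and $\mk l$-integral $\beta_t$ such that $a(\mk g, \mk n) \equiv \sum_t \beta_t a(f, \gamma_t \Delta \mk n) \ml$ for all $(\mk n, N) = 1$, with the convention $a(f, \cdot) = 0$ on non-integer arguments. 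By \eqref{serreasy} the number of $\mk n \le y$ coprime to $N$ with $a(\mk g, \mk n) \nl$ is $\asymp y/(\log y)^{\alpha(f)}$, and for each such $\mk n$ at least one $m = \gamma_t \Delta \mk n$ is a positive integer with $a(f,m) \nl$. Since $(t,\mk n) \mapsto m$ is at most $T$-to-$1$ (with $T$ the number of indices $t$), taking $y = x/(\Delta \max_t |\gamma_t|)$ yields $\gg x/(\log x)^{\alpha(f)}$ distinct $m \le x$ with $a(f,m) \nl$.

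For \eqref{lubd} (with $N$ square-free), the upper bound is immediate from $\pi_{\mrm{sf}}(f,x) \subseteq \pi(f,x;\mk l)$. For the lower bound I apply \propref{mainprop}~(a) with the admissible set $\{p_{i,j}\}$ from section~\ref{mij}~(B), obtaining square-free $\gamma_t \in \mf Q$ coprime to $N$, and restrict the count to square-free $\mk n$ coprime to $NU$ where $U = \prod p_{i,j}$. For such $\mk n$, coprimality to $U$ forces the denominator of each surviving $\gamma_t$ (whose primes lie in $\{p_{i,j}\}$) to equal $1$; the contributing $\gamma_t$ are thus square-free integers coprime to both $N$ and $\mk n$, and combined with $\Delta \mid N$ being square-free this makes $m = \gamma_t \Delta \mk n$ square-free. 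The restricted count of $\mk n$ is still $\asymp y/(\log y)^{\alpha(f)}$ by the Serre--Chebotarev--Delange asymptotic for $\mk g$ combined with standard coprimality/square-free sieving (as in \cite{j, js, jsh}); verifying that these restrictions do not degrade the exponent $\alpha(f)$ is the main technical point.
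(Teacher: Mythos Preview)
Your argument is correct and follows essentially the same route as the paper: the upper bound comes from the newform decomposition \eqref{eq:decomp} together with \lemref{lint} and Serre's asymptotic for each $f_i$, and the lower bound comes from \propref{mainprop} by pushing the count for the extracted newform $\mk g$ forward to $f$. Two small remarks: the paper restricts $\mk n$ to be coprime to $PN\ell$ so that $\mk n \mapsto \gamma_{t(\mk n)}\Delta\mk n$ is actually injective (rather than $T$-to-$1$), but your finite-to-one version is equally valid; and your explicit choice of $i_0$ realising $\min_i \alpha(f_i)$ makes transparent why a \emph{single} exponent $\alpha(f)$ works for both the upper and lower bounds, a point the paper leaves implicit.
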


\begin{rmk} \label{klarge}
Note that we assumed $K$ to be a bit large -- containing the field $L$ generated by all the eigenvalues of newforms of level dividing $N$. However this is just for notational convenience; one could have passed to the normal closure, say $\widetilde K$ of $KL$ and noted that $\pi(f,x; \mk B) = \pi(f,x; \mk l)$ for any $\mk B \in \widetilde K$ lying over $\mk l$.
\end{rmk}

\begin{proof}
We start from \eqref{maineq} in \propref{mainprop} in the special case when $Q=N=1$.
For each $\mk n$ such that $(\mk n,N)=1$ and $a(\mk g,\mk n) \nl$, from \eqref{maineq} there exists a $t \mid N$ such that $\beta_t a(f, \mk n \gamma_t \Delta) \nl$. Choose the smallest such $\mk n$, call it $t(\mk n)$. Then the map 
\begin{equation} \label{inj1}
\pi_{PN \ell }(\mk g,x) \longrightarrow \pi(f, PN  x), \q \mk n \mapsto \gamma_{t(\mk n)} \Delta \mk n,
\end{equation}
where $P= \prod_{i,j} q_{i,j}$ as recalled above, is clearly injective since $\mk n$ is away from $PN$. Note that $\gamma_{t(\mk n)}$, in this case, is a polynomial in the $p_{i,j}$s.
Therefore for all $x \ge 1$,
\begin{equation}
\#\pi_{PN \ell }(\mk g,x) \le \#\pi(f, PN  x);
\end{equation}
and the lower bound in \eqref{asymp1} follows from \eqref{inj1} and from results of Serre \cite[\S~(4.6)]{serre} for newforms. Clearly Serre's asymptotic formulae for $\pi_{PN \ell }(\mk g,x)$ also hold if we omit finitely many primes from the ensemble that he starts with (loc. cit.). More precisely from \cite{serre}, via the Galois representation attached with $\mk g$, the primes $p \nmid N\ell$ are "Frobenian" and has analytic density $\alpha(\mk g)$ with $0 < \alpha(\mk g) \le 3/4$ (cf. \cite[(6.3)]{serre}); and Serre shows that from this one can, using analytic techniques, give an asymptotic formula for $\pi_{PN \ell }(\mk g,x) $. Since the asymptotic formula depends only on the density of primes concerned, our claim holds true. We take $\alpha(f):= \alpha(\mk g)$.

For the upper bounds, we look instead at \eqref{eq:decomp} and apply the same argument just presented. Here we consider the map $\mk n \mapsto \left(\, \{ i(\mk n), \delta(\mk n) \}, \mk n /\delta(\mk n) \right)$ where $i(\mk n)$ is the smallest index $i$ for which $\alpha_{i,\delta} a(f_i |B_\delta, \mk n) \nl$ for some $\delta \mid N$ in \eqref{maineq} and once $i(\mk n)$ has been chosen, $\delta(\mk n)$ is the smallest divisor of $N$ such that $\alpha_{i(\mk n) ,\delta(\mk n) }  a(f_{i(\mk n)} |B_{\delta(\mk n)}, \mk n) \nl$. This is clearly an injective map, whence
\begin{equation}
\pi(f,x) \hookrightarrow \amalg_{i, \delta} \pi(f_i |B_\delta,   x) ,
\end{equation}
$\amalg$ being the \textsl{disjoint} union. We get 
\begin{equation}
\# \pi(f,x) \le \sum_{i, \delta} \pi(f_i |B_\delta,   x) \le \dim \mN \cdot \max_{i,\delta} \{ \# \pi(f_i |B_\delta,   x)  \} \ll \max_{i}  \{ \# \pi(f_i ,   x) \}
\end{equation}
and again results of Serre from \cite{serre}  (cf. \eqref{serreasy}) does the job.

Moreover if $N$ is square-free, the above argument also works almost verbatim for $\pi_{\mrm{sf}}(f,x)$. In this case we have the additional information: 

(i) $\Delta$ is square-free,

(ii) all the $\gamma_t$ appearing in \propref{mainprop} are all square-free rational numbers such that $(\gamma_t,N)=1$ (so that if $\mk n$ is square free and $(\mk n, \gamma_t N)=1$, $ \gamma_{t(\mk n)} \Delta \mk n$ is also square-free in \eqref{inj1}), and

(iii) Serre's asymptotic results clearly hold when we count over square-free integers as well. By this we mean an asymptotic formula of the form
\begin{equation}\label{biluasymp}
\pi_{\mrm{sf}, UN\ell }(\mk g, x) \sim c_f x/(\log x)^{\alpha(f)} \qq (\mk g \, \text{newform})
\end{equation}
for some constant $c_f$, $\alpha(f)$ as before.
This follows from the proof in \cite[p.~5]{serre}, in our case the generating function $\mc F(s)$ of $\pi_{\mrm{sf}, UN}(\mk g) := \{  n \text{ square-free },  (n,UN\ell)=1, \, a(\mk g,n) \nl   \}$ is just $\mc F(s)  = \sum_{n \in \pi_{\mrm{sf}, UN}(\mk g)} n^{-s}  = \prod_{p \in \pi_{\mrm{sf}, UN\ell}(\mk g)} (1+p^{-s})$ and the subsequent arguments in \cite{serre} hold verbatim, leading to the asymptotic formula \eqref{biluasymp}. 

These take care of the lower bound for $\pi_{\mrm{sf}}(f,x)$. The argument for the upper bound remains the same as before.
\end{proof}

\begin{exam} \label{delta-exam}
We work out an explicit set of primes $P$ outside of which one has  an asymptotic formula for $\pi(\Delta^2,x)$ which conforms to \propref{lubd}. We refer the reader to the LMFDB database for some of our calculations here. Namely $\dim S_{24}(1)=2$, and is spanned by two newforms say $X_1$ and $X_2$ 
\begin{equation}
X_1(\tau) = q + (540-\beta)q^2 +\cdots, \q  
X_2(\tau) = q + (540+\beta)q^2 +\cdots;
\end{equation}
which are conjugate under the Galois group $G\simeq \mf Z/2 \mf Z$ of the coefficient field $L=\mf Q(X_1) = \mf Q(X_2)= Q(\sqrt{D})$ with the fundamental discriminant $D=144169$. The Sturm's bound here is $\mc S(24,1)=2$ and the smallest prime $q$ such that $a(X_1,q) \ne a(X_2,q)$ is $q=2$. Following LMFDB, we put $\beta = 12 \sqrt{D}$. Finally note that $L$ has class number one, and that $D$ is a prime number.

We let $\mk l$ lie over $\ell$. One easily checks that (again by using Sturm's bound)
\begin{equation}
\Delta^2 = \frac{-1}{2\beta} X_1 + \frac{1}{2\beta}  X_2.
\end{equation}
Hence if we request that 
\begin{equation}
\mk l \nmid 2 \beta = 24 \sqrt{D} \iff \ell \nmid 2 \cdot 3 \cdot D,
\end{equation}
we would be able to write over $\mk l$-integral numbers
\begin{equation} \label{deltamodl}
\Delta^2 \equiv \frac{-1}{2\beta} X_1 + \frac{1}{2\beta}  X_2 \ml.
\end{equation}

This is all we need (of course here we got it much more directly, without using any analytic means like the Hecke bound) to obtain, by considering $T_2 (\Delta^2) - a(X_2, 2) \Delta^2$ that
\begin{equation}
a(X_1,n) \equiv a(\Delta^2, 2n) - a(\Delta^2, n/2) - a(X_2,2)a(\Delta^2,n) \ml.
\end{equation}
Then considering $n$ such that $(n, 6 D \ell)=1$, we find that
\begin{equation}
a(X_1,n) \equiv a(\Delta^2, 2n)  - (540+\beta)a(\Delta^2,n)  \ml,
\end{equation}
which shows by a simple counting that
\begin{equation} \label{ex1}
a(X_1,x) \le a(\Delta^2,2x).
\end{equation}
Moreover \eqref{deltamodl}  immediately implies (since $X_1,X_2$ are Galois conjugate) for any $j=1,2$,
\begin{equation} \label{ex2}
\pi(\Delta^2,x) \le \pi(X_j,x) .
\end{equation}
Combining \eqref{ex1} and \eqref{ex2}, we see that for $x \ge 2$ and $\ell \nmid 6 \cdot 144169$ (by Serre \cite{serre}) there is a constant $c(X_1, \ell)$ such that
\begin{equation}
\pi(X_1,x/2) \le \pi(\Delta^2,x) \le \pi(X_j,x) ,
\end{equation}
and thus for any $\epsilon>0$ and $x$ large enough
\begin{equation} \label{deltaex}
\frac{1}{2}(1-\epsilon)c(X_1, \ell) \cdot x/(\log x)^{\alpha(X_1)}     \le \pi(\Delta^2, 2  x) \le (1+ \epsilon) c(X_1, \ell) \cdot x/(\log x)^{\alpha(X_1)}    .
\end{equation}
This is in contrast to the result in \cite[\S~7.1.1]{js} where $\ell=3$ and an additional factor of $\log \log x$ was obtained in \eqref{deltaex}.
\end{exam}


\subsubsection{The quantity $h(f)$} \label{hf}
We now turn our attention to the quantity $h(f;\mk l)$ (denoted simply by $h(f)$ in the introduction), which is the exponent of $\log \log x$ appearing in \eqref{ellintro}. 
\begin{prop} \label{hfl}
Let $k,N$ be fixed and $\mk l$ be a prime in $K$ lying over an odd prime $\ell$. Suppose that for some $f \in \mN(\ok)$ which is non-singular $\ml$, one has $h(f; \mk l) \neq 0$. Then the following statements hold.

(i) $\mk l \mid \mc L(\{ q_{i,j} \};f)$ for \textsl{any} set of primes $\{q_{i,j}\}$ admissible for $f$.

(ii) There exists a pair $i,j \in S_f$ ($i \ne j$) such that the congruences $b_i(n) \equiv b_j(n) \ml$ hold for all $n \ge 1$ with $(n,N)=1$.
\end{prop}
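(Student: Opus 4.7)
The plan is to argue (i) by contraposition against the two-sided bound from \propref{ellasy}, and then to derive (ii) from (i) using the pigeonhole principle together with the Hecke recursion and Brauer--Nesbitt for the mod-$\mk l$ Galois representations attached to the newforms appearing in $f$.

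For (i), I will suppose for contradiction that some admissible set $\{q_{i,j}\}$ for $f$ satisfies $\mk l \nmid \mc L(\{q_{i,j}\};f)$. Then \eqref{asymp1} of \propref{ellasy} yields
\[
|\pi(f,x;\mk l)| \asymp \frac{x}{(\log x)^{\alpha(f)}}.
\]
Since $h(f;\mk l) \ne 0$, and is a non-negative integer by \cite{js}, we have $h(f;\mk l) \ge 1$, and the asymptotic \eqref{ellintro} reads $\pi(f,x;\mk l) \sim C\, x (\log\log x)^{h(f;\mk l)}/(\log x)^{\alpha(f;\mk l)}$ with $C>0$. The $(\log\log x)^{h(f;\mk l)}$ factor is incompatible with the two-sided bound $\asymp x/(\log x)^{\alpha(f)}$: if the two exponents coincide the $\log\log$ factor breaks the upper bound, while if they differ an appropriate power of $\log x$ breaks one of the two inequalities for $x$ large. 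This contradiction establishes (i).

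For (ii), I argue by contraposition. Suppose no pair $(i,j) \in S_f \times S_f$ with $i \ne j$ has $b_i(n) \equiv b_j(n) \pmod{\mk l}$ at every $n \ge 1$ coprime to $N$. I claim that for each such pair there already exists a prime $p_{i,j} \nmid N$ with $b_i(p_{i,j}) \not\equiv b_j(p_{i,j}) \pmod{\mk l}$. If instead $b_i(p) \equiv b_j(p) \pmod{\mk l}$ held at every prime $p \nmid N\ell$, then by Chebotarev and Brauer--Nesbitt the semisimple mod-$\mk l$ Galois representations attached to $f_i$ and $f_j$ would be isomorphic, forcing in particular $\chi_i \equiv \chi_j \pmod{\mk l}$ for the Nebentypus characters. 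The Hecke recursion
\[
b_\cdot(p^{a+1}) \,=\, b_\cdot(p)\,b_\cdot(p^a) - \chi_\cdot(p)\,p^{k-1}\,b_\cdot(p^{a-1})
\]
then gives inductively $b_i(p^a) \equiv b_j(p^a) \pmod{\mk l}$ at every $p \nmid N\ell$ and every $a \ge 1$, and multiplicativity propagates this to every $n$ coprime to $N\ell$; a brief direct analysis at $p = \ell$ (if $\ell \nmid N$) recovers the congruence at every $n$ coprime to $N$, contradicting our standing hypothesis. Thus each pair admits such a prime $p_{i,j}$, the family $\{p_{i,j}\}_{i \ne j}$ is an admissible set for $f$, and $\mk l \nmid \mc L(\{p_{i,j}\};f)$, contradicting (i).

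The principal obstacle will be the Brauer--Nesbitt step, which requires the mod-$\mk l$ Galois representations of the newforms $f_i$ to be available (Deligne's representations for the cuspidal $f_i$, and the direct sum of reductions of the two Dirichlet characters for the Eisenstein $f_i$), and some care in handling the prime $\ell$ itself when transferring from "coprime to $N\ell$" back to "coprime to $N$". A secondary point is the identification of the exponent $\alpha(f)$ of \propref{ellasy} with $\alpha(f;\mk l)$ of \eqref{ellintro}; this is avoided in (i) because the mere presence of a $(\log\log x)^{h(f;\mk l)}$ factor with $h(f;\mk l) \ge 1$ already yields the contradiction irrespective of whether the exponents match.
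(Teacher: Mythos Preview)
Your argument is correct and follows the paper's strategy in both parts: contraposition via \propref{ellasy} for (i), and for (ii) the contrapositive construction of an admissible set of primes which then feeds back into (i) (the paper invokes \propref{ellasy} directly rather than (i), but this is the same thing). The one substantive difference is in how you justify the reduction in (ii) from a distinguishing integer $n$ coprime to $N$ to a distinguishing \emph{prime}. The paper writes only ``by the multiplicativity and the Hecke relations''; but the recursion $b(p^{a+1}) = b(p)b(p^a) - \chi(p)p^{k-1}b(p^{a-1})$ needs $\chi_i \equiv \chi_j \bmod\,\mk l$ to propagate a congruence at primes to one at prime powers, and the paper does not say why this holds. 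Your appeal to Chebotarev and Brauer--Nesbitt for the semisimplified mod-$\mk l$ Galois representations (Deligne for cuspidal $k\ge 2$, Deligne--Serre for $k=1$, and the reducible sum of characters for the Eisenstein newforms) supplies exactly this: equal traces at almost all Frobenii force equal determinants, hence $\chi_i \equiv \chi_j$. So your route is not a different argument so much as a more explicit version of a step the paper glosses over. One minor point of phrasing: the negation of your claim in (ii) actually gives $b_i(p)\equiv b_j(p)$ for \emph{all} primes $p\nmid N$ (including $p=\ell$ when $\ell\nmid N$), not just $p\nmid N\ell$; so in your ``brief direct analysis at $p=\ell$'' the input $b_i(\ell)\equiv b_j(\ell)$ is already available, and together with $\chi_i(\ell)\equiv\chi_j(\ell)$ the recursion at $\ell$ goes through without further work.
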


\begin{rmk} \label{hflcomm}
In particular, $(i)$ says that the quantity $h_{f, \mk l} $ in \eqref{ellintro} is $0$ for all but finitely many $\ell$. Moreover, choosing the admissible set to be $\{ m_{i,j} \}$ from the introduction, and the bounds on $m_{i,j}$ from section~\ref{mij}, we see that $h(f;\mk l)=0$ for all $\ell > \mc C$, where $\mc C$ depends only on $k$ and $N$.
\end{rmk}

\begin{proof}
Let $f$ be as in the proposition and $\{q_{i,j}\}$ be admissible for $f$. If $\mk l \nmid \mc L(\{ q_{i,j} \};f)$
then the sieving procedure described in the proof of \propref{mainprop} works, and then \propref{ellasy} shows that $h(f, \mk l) = 0$. This proves $(i)$.

For $(ii)$, suppose that for all pairs $u \neq v$ ($u,v \in S_f$) one has $b_u(n) \not \equiv b_v(n) \ml$ for some $(n,N)=1$. By the multiplicativity and the Hecke relations, for each such pair $u \neq v$ there must by a prime, say $\mf q_{u,v}$ with $(\mf q_{u,v},N)=1$ such that $b_u(\mf q_{u,v}) \not \equiv b_v(\mf q_{u,v}) \ml$. Then clearly $\{ \mf q_{u,v} \}$ is an admissible set for $f$, and thus by the quantitative result from \propref{ellasy} we must have $h(f, \mk l) =0$. This contradiction finishes the proof of $(ii)$.
\end{proof}

We record here another feature of the quantity $h(f ;\mk l)$, whose proof is omitted since it is verbatim similar to that of \propref{ellasy}. Recall from \cite{jochno} that a set $\{\lambda(p)\}_{(p,N)=1}$ is called a system of eigenvalues $\ml$ if there is an eigenform $g \ml$ such that $T(p)g=\lambda(p)g$ for all $p$. Let $\widetilde{M}(N)$ be the $\ok/\mk l$-vector space consisting of reduction $\ml$ of elements of the algebra $\bigoplus_k \mnk(\ok)$.
\begin{prop} \label{conjod}
Let $\ell$ be an odd prime and $\mk l \mid \ell$. Let $f \in \widetilde{M}(N)$ be non-singular $\ml$. Suppose that $f$ can be written as a finite linear combination of eigenforms (of all Hecke operators) $\ml$ whose system of eigenvalues are pairwise distinct. Then $h(f;\mk l)=0$.
\end{prop}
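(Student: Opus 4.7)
The plan is to adapt the proof of \propref{ellasy} essentially word-for-word, with the mod $\mk l$ decomposition into distinct eigensystems playing the role that the characteristic-zero newform decomposition \eqref{eq:decomp} played there. Write the hypothesis as
\[
 f \equiv \sum_{i=1}^{s} c_i\, g_i \ml,
\]
where the $g_i$ are eigenforms $\ml$ for all Hecke operators with $T(p)g_i \equiv \lambda_i(p) g_i \ml$, the systems $\{\lambda_i(p)\}_{(p,N)=1}$ are pairwise distinct, and the $c_i \in \ok/\mk l$ are non-zero. Since the eigensystems are pairwise distinct, for each ordered pair $i\ne j$ there is a prime $q_{i,j}$ with $(q_{i,j},N\ell)=1$ such that $\lambda_i(q_{i,j})\not\equiv \lambda_j(q_{i,j})\ml$; let $Q:=\prod_{i\ne j} q_{i,j}$.

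Next I would run the sieving of the proof of \propref{mainprop} in the quotient $\ok/\mk l$. Starting from the expansion above and successively applying operators of the form $T(q_{1,j})-\lambda_j(q_{1,j})$ for $j=2,\dots,s$, one kills the components $g_2,\dots,g_s$ modulo $\mk l$ and arrives at a relation
\[
\Bigl(\prod_{j\ne 1}(\lambda_1(q_{1,j})-\lambda_j(q_{1,j}))\Bigr) c_1\, g_1 \;\equiv\; P(T)\, f \ml,
\]
for an explicit polynomial $P$ in Hecke operators at the primes $q_{1,j}$. The coefficient on the left is a unit $\ml$ by construction, so this unwinds to a congruence of the same shape as \eqref{maineq}, namely
\[
 a(g_1,\mk n) \;\equiv\; \sum_{t}\beta_{t}\, a(f,\gamma_{t}\mk n) \ml \qquad \text{for all } (\mk n,QN\ell)=1,
\]
with $\mk l$-integral $\beta_t$ and rationals $\gamma_t$ whose prime factors are among the $q_{1,j}$.

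From here the counting argument of \propref{ellasy} goes through verbatim. For the lower bound, the map $\mk n \mapsto \gamma_{t(\mk n)}\mk n$ injects $\pi_{QN\ell}(g_1,x)$ into $\pi(f,Qx; \mk l)$; since $g_1$ is a mod $\mk l$ Hecke eigenform it lifts by Deligne–Serre to a characteristic-zero eigenform and Serre's Chebotarev/Selberg–Delange analysis of its attached Galois representation gives $\pi_{QN\ell}(g_1,x)\asymp x/(\log x)^{\alpha(g_1)}$, with \emph{no} $\log\log$ factor. For the upper bound, the decomposition $f\equiv\sum c_i g_i \ml$ gives $\#\pi(f,x;\mk l)\le \sum_i \#\pi(g_i,x;\mk l)\ll \max_i x/(\log x)^{\alpha(g_i)}$ by the same lifting-plus-Serre input applied to each $g_i$. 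Matching the two bounds forces the exponent of $\log\log x$ in \eqref{ellintro} to be zero, i.e.\ $h(f;\mk l)=0$.

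The only real subtlety, which I expect to be the main obstacle, is the invocation of Deligne–Serre to guarantee that each mod $\mk l$ eigenform $g_i$ actually comes from a characteristic-zero eigenform so that Serre's asymptotic (with a clean $x/(\log x)^{\alpha}$ shape) is applicable; for elements of $\widetilde{M}(N)$ coming from $\bigoplus_k M_k(\Gamma_1(N))(\ok)$ this is standard, but one should verify that the finitely many bad primes introduced by the lifting (level of the lift, auxiliary primes appearing in $Q$) are harmlessly absorbed into the implied constants, exactly as in the last paragraph of the proof of \propref{ellasy}.
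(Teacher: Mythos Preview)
Your proposal is correct and is precisely what the paper has in mind: the paper omits the proof entirely, remarking only that it is ``verbatim similar to that of \propref{ellasy}'', and your write-up is exactly that adaptation, with the decomposition into pairwise-distinct mod $\mk l$ eigensystems replacing the newform decomposition \eqref{eq:decomp} and Deligne--Serre lifting supplying the characteristic-zero eigenforms to which Serre's asymptotics apply. The subtlety you flag about the lift (possible change of weight/level, auxiliary primes in $Q$) is indeed harmless for the reason you state, since Serre's density results depend only on the Frobenian set of primes and are insensitive to removing finitely many primes.
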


This leads us to formulate the following conjecture.
\begin{conj} \label{conjs}
Let $\ell$ be an odd prime and $\mk l \mid \ell$. Let $f \in \widetilde{M}(N)$ be non-singular $\ml$. Then $h(f;\mk l)=0$ if and only if $f$ can be written as a finite linear combination of eigenforms (of all Hecke operators) $\ml$ whose system of eigenvalues are pairwise distinct.
\end{conj}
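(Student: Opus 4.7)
The ``if'' direction is exactly \propref{conjod}, so the task is the converse: assuming $h(f;\mk l)=0$ and $f$ non-singular $\ml$, we must exhibit $f$ as a linear combination of Hecke eigenforms $\ml$ whose systems of eigenvalues are pairwise distinct. The plan is to decompose $f$ using the semi-local structure of the Hecke algebra acting on $\widetilde{M}(N)$, and then to show that any nontrivial Jordan (nilpotent) block appearing in this decomposition strictly inflates $h(f;\mk l)$.

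First I would invoke Jochnowitz: only finitely many systems of eigenvalues $\ml$ occur in $\widetilde{M}(N)$. Denote by $\mathbb{T}$ the algebra generated by $T(p)$ with $(p,N\ell)=1$ acting on $\widetilde{M}(N)$, and let $\mk m_1,\ldots,\mk m_r$ be its maximal ideals. The semi-local splitting $\widetilde{M}(N) = \bigoplus_{i=1}^{r} \widetilde{M}(N)_{\mk m_i}$ yields a canonical decomposition $f = \sum_i f_i$ inside $\widetilde{M}(N)$. If each component $f_i$ is actually annihilated by $\mk m_i$ (not just by some power of it), then the $f_i$ are genuine eigenforms attached to pairwise distinct systems of eigenvalues and the conclusion is immediate.

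The crux is therefore to rule out the case in which some $f_{i_0}$ is a genuine generalised eigenvector, i.e.\ $\mk m_{i_0}^{t} f_{i_0} \equiv 0$ but $\mk m_{i_0}^{t-1} f_{i_0} \not\equiv 0$ for some $t \geq 2$. Here the idea is to adapt the sieving of \propref{mainprop}: on $\widetilde{M}(N)_{\mk m_{i_0}}$ every operator $T(p) - \lambda_{i_0}(p)$ (with $\lambda_{i_0}$ the system of eigenvalues cut out by $\mk m_{i_0}$) is nilpotent, so iterating the sieve produces a strict filtration whose graded quotients are pure $\lambda_{i_0}$-eigenforms. Working through this filtration one level at a time, each passage should contribute an extra factor of $\log\log x$ in the Selberg--Delange analysis of $\#\pi(f,x;\mk l)$ in the spirit of \cite{js,jsh}, leading to an asymptotic
\[
\#\pi(f,x;\mk l) \asymp x (\log\log x)^{t-1} / (\log x)^{\alpha(f)},
\]
which forces $h(f;\mk l) \geq t-1 \geq 1$, contradicting the hypothesis.

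The main obstacle is turning the heuristic of the previous paragraph into a theorem. One has to translate the nilpotent Hecke-module structure at $\mk m_{i_0}$ into the analytic input required by Selberg--Delange: on the Galois side this corresponds to studying nontrivial self-extensions of the residual representation attached to $\lambda_{i_0}$, and one needs to prove that such extensions enlarge the Frobenian set underlying the Dirichlet series $\sum_n a(f,n) n^{-s}$ by exactly $(\log\log x)^{t-1}$ and not less. I expect this to require either a pseudo-deformation-theoretic model for $\widetilde{M}(N)_{\mk m_{i_0}}$ \`a la Bella\"iche, or a direct Euler-product analysis tracking the Jordan block through the local Hecke factors; either route appears substantially harder than the techniques used in the present paper, which is consistent with the statement being left as a conjecture.
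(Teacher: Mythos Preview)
The statement you are addressing is \conjref{conjs}, and the paper does \emph{not} supply a proof of it: only the ``if'' direction is established (as \propref{conjod}, whose proof is declared verbatim to that of \propref{ellasy}), while the converse is explicitly left open. So there is no argument in the paper against which to benchmark your attempt at the converse.

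Your outline for the converse is a sensible heuristic and you are candid that the analytic step is the gap. A couple of points deserve flagging if you pursue this further. First, the global splitting $\widetilde{M}(N)=\bigoplus_i \widetilde{M}(N)_{\mk m_i}$ is delicate because $\widetilde{M}(N)$ is infinite-dimensional over $\ok/\mk l$; what you actually need (and what suffices) is the generalized-eigenspace decomposition of the finite-dimensional Hecke-stable subspace in which your particular $f$ sits. Second, even granting that a length-$t$ nilpotent block on $f_{i_0}$ forces $h(f_{i_0};\mk l)\ge 1$, you still must pass from $h(f_{i_0};\mk l)$ back to $h(f;\mk l)$: the sieving of \propref{mainprop} gives an injection of supports only after throwing away primes, so one needs to check that the other components $f_j$ ($j\ne i_0$) do not depress the exponent of $\log\log x$ in the count for $f$ itself. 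Finally, the precise claim $\#\pi(f,x;\mk l)\asymp x(\log\log x)^{t-1}/(\log x)^{\alpha(f)}$ is stronger than required; for the contrapositive you only need $h(f;\mk l)\ge 1$, and aiming for that weaker inequality may be more tractable than pinning down the exact power $t-1$, which even \cite{js} describes as mysterious.
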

The conjecture, if true, implies via Jochnowitz's result \cite{jochno} on finitely many systems of eigenvalues $\bl$ that there are only finitely many $f \in \widetilde{M}(N)$ with $h(f; \mk l)=0$.

\section{Siegel modular forms} \label{siegelsec}
First we discuss a Sturm bound for Siegel modular forms with level. This would be required to quantify the congruence primes in our results to follow. For the Sturm bound, we follow some arguments by Ram Murty \cite{murty}.

\begin{prop} \label{siegel-sturm}
Let $F \in \mnk(\ok)$ be such that $F \nl$. Then there exist $T \in \Lambda_n$ with 
\begin{equation} \label{siestbd}
\mc M(T) \le \left(\frac{4}{3} \right)^n \frac{k}{16} [\spn \colon \Gamma_1(N)] =: \mc S^n(k,N)
\end{equation}
such that $a_F(T) \nl$.
\end{prop}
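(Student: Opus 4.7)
The plan is to mimic R. Murty's classical trick of raising $F$ to a norm-like modular form on the full modular group and then invoke Raum's level-one bound (which is the $N=1$ case of the claimed inequality). Concretely, set $r := [\spn : \Gamma^n_1(N)]$ and fix right coset representatives $\gamma_1 = I_{2n}, \gamma_2, \ldots, \gamma_r$ of $\Gamma^n_1(N) \backslash \spn$. Define
\[
G(Z) \;:=\; \prod_{i=1}^{r} (F |_k \gamma_i)(Z).
\]
Because right multiplication by any $\sigma \in \spn$ permutes the cosets $\{\Gamma^n_1(N)\gamma_i\}$, one has $G|_{kr}\sigma = G$ for every $\sigma$, so $G \in M^n_{kr}(\spn)$.

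Next, I would ensure $G$ is nonvanishing modulo a suitable prime. The Fourier coefficients of each $F|_k\gamma_i$ lie in the finite extension $\ok[\zeta_N]$ (assuming $\mk l \nmid N$ to keep things clean); pick any prime $\widetilde{\mk l}$ of this ring above $\mk l$. Since the slash action is a bijection on modular forms, $F|_k\gamma_i \not\equiv 0 \bmod \widetilde{\mk l}$ for each $i$, as $F \not\equiv 0 \bmod \mk l$. Fix a total order $\prec$ on $\Lambda_n$ compatible with addition (lexicographic on the entries $(t_{11}, 2t_{12}, t_{22}, 2t_{13}, \ldots)$ works). For any nonzero formal Fourier series $H = \sum_T a(T) e(TZ)$ with coefficients in an integral domain, the $\prec$-leading index $\mu_\prec(H)$ exists and satisfies $\mu_\prec(H H') = \mu_\prec(H) + \mu_\prec(H')$. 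Hence the ring of such formal series is itself a domain, and $G \not\equiv 0 \bmod \widetilde{\mk l}$.

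Now apply Raum's level-one Sturm bound to $G \in M^n_{kr}(\spn)$: there exists $T \in \Lambda_n$ with
\[
\mc M(T) \;\le\; (4/3)^n \cdot (kr)/16 \;=\; \mc S^n(k,N)
\]
and $a_G(T) \not\equiv 0 \bmod \widetilde{\mk l}$. Expanding via the product formula,
\[
a_G(T) \;=\; \sum_{\substack{T_1,\ldots,T_r \in \Lambda_n \\ T_1 + \cdots + T_r = T}} \prod_{i=1}^{r} a_{F|_k\gamma_i}(T_i),
\]
forces some summand to be nonzero modulo $\widetilde{\mk l}$. The $i=1$ component (identity coset) of that summand then yields $T_1 \in \Lambda_n$ with $T_1 \le T$ in the positive semi-definite order and $a_F(T_1) \not\equiv 0 \bmod \mk l$. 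Since $\mc M$ is monotone under the PSD order --- $T_1 \le T$ implies $v^t T_1 v \le v^t T v$ for every $v$, hence $\mc M(T_1) \le \mc M(T)$ --- we obtain $\mc M(T_1) \le \mc S^n(k,N)$, as required.

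The main obstacle is the integrality bookkeeping at non-cuspidal slashes: one has to verify that the Fourier coefficients of $F|_k\gamma_i$ are indeed $\widetilde{\mk l}$-integral after passing to $\ok[\zeta_N]$, which relies on standard cusp analysis for $\Gamma^n_1(N)$ and may force the mild hypothesis $\mk l \nmid 2N$ (harmless, since the claim is only about the existence of $T$). The integral-domain lemma and the PSD-monotonicity of $\mc M$ are routine once the monomial order on $\Lambda_n$ is fixed. Everything else is then an appeal to Raum's level-one theorem.
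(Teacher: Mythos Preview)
Your approach is essentially the same as the paper's: it explicitly says the proof follows Murty's norm-map trick (forming $G=\prod_i F|_k\gamma_i$ to descend to level one) and then appeals to the Raum--Richter level-one Sturm bound, with the two needed inputs---an integral basis for $M^n_k(\Gamma_1(N))$ and bounded denominators of Fourier coefficients at all cusps---supplied by Shimura. Your identification of the integrality bookkeeping at non-identity cusps as the main obstacle is exactly the point the paper isolates, and your handling of the remaining steps (the product expansion and the PSD-monotonicity of $\mc M$) is correct.
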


\begin{proof}
The proof is analogous to the argument used in \cite{murty} (essentially be reducing to level one \cite{raum} by using the `norm' map) and we do not repeat it. We just note that the main two ingredients that go into the proof in \cite{murty}:

(i) $\mnk$ has a basis consisting of elements with Fourier coefficients in $\mf Z$;

(ii) if $F \in \mnk(\mc O_K)$, then in each cusp, it's Fourier coefficients are also in a number field and have bounded denominators;

are available from the work of Shimura \cite{shimura}. 
\end{proof}

Next we recall the notion of a singular Siegel modular form of degree $n$. Namely, $F $ as \propref{siegel-sturm} is called singular $\ml$ if $a_F(T) \oml$ for all $T \in \Lambda_n^+$. We would not directly apply the following lemma in this paper, but indirectly to give a convenient hypothesis in \thmref{smodp}. Also we believe this is not written down in the literature, and could be useful elsewhere.

\begin{lem} \label{sing}
Assume that $N$ is coprime to $\ell$ ($\ell$ odd) and $F\in \mnk(\ok)$ and let ${\mathfrak l}$ be a prime of $K$ dividing $\ell$.
Assume that $F$ is $\mk l$ singular of rank $t$, i.e.
\[ r= \max\{ \mrm{rank}(T)\,\mid T\in \Lambda_n,\quad a_F(T)
\neq 0\bmod {\mathfrak l}\} \]   
with $1\leq t\leq n-1$.
Then $2k-t$ is divisible by $\ell-1$.
\end{lem}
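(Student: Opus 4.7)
The statement is the $\mk l$-adic analogue of the classical Freitag--Resnikoff--Weissauer theorem, which asserts that over $\mf C$, a non-zero Siegel modular form of degree $n$, weight $k$, whose Fourier expansion is supported on matrices of rank $\le t$ (with some rank-$t$ matrix in the support), must satisfy $2k = t$. My plan is to reduce the mod-$\mk l$ statement to this classical theorem via a Hasse-invariant weight-shift and a filtration argument.

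First, since $\ell$ is odd with $\ell \nmid N$, there exists a Siegel \emph{Hasse-type} modular form $A \in M^n_{\ell-1}(\Gamma_1(N))(\ok)$ of weight $\ell - 1$ with $A \equiv 1 \pmod{\mk l}$. For $\ell$ large relative to $n$, $A$ can be taken to be (an integral multiple of) the Siegel--Eisenstein series $E^{(n)}_{\ell-1}$; in general it is supplied by the Hasse invariant of the moduli stack of principally polarised abelian varieties of dimension $n$ with $\Gamma_1(N)$-level structure in characteristic $\ell$. Replacing $F$ by $F \cdot A^m$ for any $m \ge 0$ preserves the reduction $F \bmod \mk l$ while shifting the weight to $k + m(\ell - 1)$. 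In particular, both the $\mk l$-singular rank $t$ and the residue class $2k \bmod (\ell - 1)$ depend only on the reduction of $F$ modulo $\mk l$.

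Next, I would argue by induction on $n - t$, using the Fourier--Jacobi expansion \e{fj}, to show that $F$ modulo $\mk l$ can be matched, as a $q$-expansion, with a linear combination of theta series $\theta_Q$ associated to positive definite even integral $Q$ of rank $t$ (such $\theta_Q$ have weight $t/2$ over $\mf C$). In the base case $t = n - 1$, the $\mk l$-singular hypothesis forces the theta coefficients $h_{\mu_0}$ of each Fourier--Jacobi term $\phi_S$ with $S \in \Lambda^+_{n-1}$ in \e{jacobi1} to reduce modulo $\mk l$ to scalars, and the classical Jacobi singular-weight theorem (Ziegler, Eichler--Zagier) provides the desired theta-expansion lift; the induction step peels off one further Fourier--Jacobi layer to isolate a rank-$t$ block. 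The hypothesis $\ell \nmid N$ is essential here, since it guarantees that the relevant $\theta_Q$ have levels coprime to $\ell$ so that their mod-$\mk l$ reductions are well-defined and non-degenerate.

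Once such a matching is established, we have $F \equiv \sum_i c_i\, \theta_{Q_i} \pmod{\mk l}$ with $c_i \in \ok$ and each $\theta_{Q_i}$ of weight $t/2$. Comparing the weights of the two sides via the filtration of Siegel modular forms modulo $\ell$ (the analogue of Serre's filtration in the elliptic case, as developed by Ichikawa and B\"ocherer--Nagaoka), one concludes that the weights on both sides agree modulo $\ell - 1$, i.e.\ $2k \equiv t \pmod{\ell - 1}$, which is the desired divisibility. The main obstacle is carrying out the inductive theta-expansion mod $\mk l$ uniformly in $n - t$; a cleaner alternative I would explore in parallel is via the generalised theta operators $\Theta^{[j]}$ on mod-$\ell$ Siegel forms following B\"ocherer--Nagaoka, where one verifies that $\Theta^{[j]} F \oml$ for $j > t$ while $\Theta^{[t]} F \nl$, and the filtration formula $\omega(\Theta^{[j]} F) \le \omega(F) + j(\ell + 1)$ together with non-degeneracy at $j = t$ yields the congruence $2k \equiv t \pmod{\ell - 1}$ directly (with the odd-$t$ case of the lemma reducing to non-existence, since the right-hand side has parity $t$ while the left is even).
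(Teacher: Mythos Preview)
Your overall instinct---that an $\mk l$-singular form of rank $t$ should ``look like'' a weight-$t/2$ theta object modulo $\mk l$, and that a weight comparison then forces $2k\equiv t\pmod{\ell-1}$---is the right one, and it is also the idea behind the paper's proof. However, the route you propose has real gaps, while the paper's argument is both shorter and avoids the heavy machinery you invoke.

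Concretely, two of your steps are not justified. First, the inductive ``theta-matching'' $F\equiv\sum_i c_i\,\theta_{Q_i}\pmod{\mk l}$ is only sketched; already in the base case $t=n-1$ you appeal to a mod-$\mk l$ Jacobi singular-weight theorem in arbitrary degree and arbitrary level, and the passage from ``each $h_{\mu_0}$ reduces to a scalar'' to a global theta identity for $F$ itself (not just for a single $\phi_S$) is not explained. Second, and more seriously, the filtration step is essentially circular: to deduce $2k\equiv t\pmod{\ell-1}$ from a congruence between a weight-$k$ Siegel form and a weight-$t/2$ theta sum, you need precisely a Siegel-degree analogue of Serre's ``congruent forms have congruent weights'', which in the literature (e.g.\ B\"ocherer--Nagaoka, B\"ocherer--Kikuta) is proved by \emph{reducing to degree one}---i.e.\ by the very argument the paper gives here. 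Your theta-operator alternative has the same issue: the inequality $\omega(\Theta^{[j]}F)\le\omega(F)+j(\ell+1)$ alone cannot produce a congruence on $2k-t$; you would need an exact filtration drop statement at $j=t$, which again is not available without first going down to degree one.

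The paper's proof bypasses all of this. One applies $\Phi^{n-t-1}$ to land in degree $t+1$, picks $T_0\in\Lambda_t^+$ with $a_F\bigl(\begin{smallmatrix}0&0\\0&T_0\end{smallmatrix}\bigr)\not\equiv 0$, and looks at the single theta component $h_0$ of the Fourier--Jacobi coefficient $\phi_{T_0}$. The rank-$t$ hypothesis forces $h_0$ to be congruent to a nonzero constant (a unit) modulo $\mk l$. Squaring, $h_0^2$ is an \emph{elliptic} modular form of integral weight $2k-t$ on $\Gamma_1(N)\cap\Gamma_0(M)$ (with $M$ the level of $2T_0$) congruent to a unit; after a standard level-lowering if $\ell\mid M$, the classical degree-one fact that a form congruent to $1$ has weight divisible by $\ell-1$ finishes the argument. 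No Siegel-degree filtration, no induction on $n-t$, and no theta operators are needed.
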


This is just a $\Gamma_1(N)$ variant of Corollary 3.7 in \cite{boki}
so we only give a brief sketch of proof.

\begin{proof}
The basic strategy is to associate to $F$ as above an elliptic modular form $g$ for $\Gamma_1(R)$ of level
$R$ coprime to $p$ and congruent mod $\mathfrak l$ to a unit in ${\mathcal O}_{\mathfrak l}$;
the weight $k'$ of such a modular form must be divisible by $\ell-1$ by Serre et al.

To do this, we may
choose $T\in \Lambda_n$ of rank $t$ with 
$a_F(T)\neq 0\bmod {\mathfrak l}$ to be of the form $T=\begin{psm} 0 & 0\\ 0 & T_0\end{psm}$ with $T_0\in \Lambda_t^+$. Also, 
we may apply Siegel's $\Phi$-operator several times to go to a modular form $f$ of degree $t+1$.
Then we consider the Fourier-Jacobi coefficient
$\phi_{T_0}$ of $f$ and its theta decomposition; the modular form $h_0$ in this theta decompositon  is then
congruent to a constant
(unit) mod ${\mathfrak l}$.
Furthermore $h_0^2$ is a modular form for $\Gamma_1(N)\cap \Gamma_0(M)$
with nebentypus character $\left(\frac{-4}{*}\right)$ where
$M$ is the level of $2T_0$; the weight is $2k-t$.
If $M$ is coprime to $p$ we may take $g:=h_0^2$ and apply the 
statement from above.
Otherwise, we write $M=\ell^s\cdot M'$ with $M'$ coprime to $\ell$ and 
by standard level changing, there is a modular form $g$ for $\Gamma_1(NM')$ 
of weight $2k-t+ m\cdot (\ell-1)$ for some nonnegative integer $m$
with $g\equiv h_0^2\bmod {\mathfrak l}$; now we argue as before with this $g$.
\end{proof}

Let us recall now the following result due to B\"ocherer-Nagaoka, which was used to prove that congruences between Siegel modular forms imply congruences between their weights.
For $F \in \mnk(\ok)$ we put 
\[  \nu_{\mk l}(F):= \min_{T \in \Lambda_n} \nu_{\mk l}(a_F(T)).  \]

\begin{prop}[\cite{bona}] \label{bo-na}
Let $\mk l \subset \mc O_K$ be a prime ideal.
For every $F \in \mnk (\mc O_K)$ there exists for all sufficiently large $R \in \mf N$ an elliptic modular form $f \in M_k(\Gamma_1(NR^2))(\mc O_K)$ such that the Fourier coefficients of $f$ are finite sums of those of $F$; and $\nu_{\mk l}(f) = \nu_{\mk l}(F)$.
\end{prop}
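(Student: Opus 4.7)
The plan is to construct $f$ from the Fourier--Jacobi expansion \eqref{fj} of $F$ by restricting the relevant Fourier--Jacobi coefficient to a torsion point $z\in\tfrac{1}{R}\mf Z^{n-1}/\mf Z^{n-1}$ and averaging against an additive character so as to isolate one Siegel Fourier coefficient of $F$. First choose $T^{*}\in\Lambda_n$ with $\nu_{\mk l}(a_F(T^{*}))=\nu_{\mk l}(F)$. If $F$ is $\mk l$-singular in such a way that no minimizing $T^{*}$ has a positive definite lower block, apply Siegel's $\Phi$-operator finitely many times first to reduce to a form where a positive definite minimizer exists (in the non-singular case this is automatic). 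Decompose $T^{*}=\begin{psm} r^{*} & \mu^{*t}/2\\ \mu^{*}/2 & S^{*}\end{psm}$ with $S^{*}\in\Lambda_{n-1}^{+}$, so that the Fourier--Jacobi coefficient $\phi_{S^{*}}$ is a Jacobi form of weight $k$ and matrix index $S^{*}$ on $\Gamma_{1}(N)\ltimes\mf Z^{n-1}$ (cf.\ \eqref{jacobi1}--\eqref{jacobi3}), with $b(r^{*},\mu^{*})=a_F(T^{*})$ attaining the minimum valuation.

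For $R\in\mf N$ I would set
\begin{equation*}
f(\tau) := \frac{1}{R^{n-1}}\sum_{\lambda\in\mf Z^{n-1}/R\mf Z^{n-1}} e\!\left(-\mu^{*t}\lambda/R\right)\phi_{S^{*}}(\tau,\lambda/R).
\end{equation*}
Orthogonality of characters on $(\mf Z/R\mf Z)^{n-1}$ gives the $r$-th Fourier coefficient of $f$ as $\sum_{\mu\equiv\mu^{*}\bmod R} b(r,\mu)$: a \emph{finite} integer sum of Fourier coefficients of $F$, because $\begin{psm} r & \mu^{t}/2\\ \mu/2 & S^{*}\end{psm}\in\Lambda_n$ forces $\mu^{t}(S^{*})^{-1}\mu\le 4r$ and hence bounds $|\mu|$ in terms of $r$ and $S^{*}$. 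For $R$ strictly larger than the diameter of this bounded set (in particular for $r=r^{*}$), the congruence $\mu\equiv\mu^{*}\bmod R$ pins $\mu=\mu^{*}$, so $a(f,r^{*})=a_F(T^{*})$; thus $\nu_{\mk l}(f)\le\nu_{\mk l}(F)$, while the reverse inequality is automatic since each Fourier coefficient of $f$ is a $\mf Z$-linear combination of Fourier coefficients of $F$.

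To see $f\in M_{k}(\Gamma_{1}(NR^{2}))(\ok)$, I would combine the Jacobi transformation $\phi_{S^{*}}(\gamma\tau,z/(c\tau+d))=(c\tau+d)^{k}e(cS^{*}[z]/(c\tau+d))\phi_{S^{*}}(\tau,z)$ with the quasi-periodicity $\phi_{S^{*}}(\tau,z+\alpha\tau+\beta)=e(-S^{*}[\alpha]\tau-2\alpha^{t}S^{*}z)\phi_{S^{*}}(\tau,z)$ for $\alpha,\beta\in\mf Z^{n-1}$. Substituting $z=\lambda/R$ and choosing $\alpha=\lambda c/R$, $\beta=\lambda(d-1)/R$ (both integral as soon as $R\mid c$ and $R\mid d-1$), the accumulated exponent telescopes to $c(d-2)S^{*}[\lambda]/R^{2}$, which lies in $\mf Z$ whenever $\gamma\in\Gamma_{1}(NR^{2})$ (taking $R$ even if necessary to absorb the half-integrality of $S^{*}[\lambda]$). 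The outer character sum is independent of $\gamma$, so $f$ transforms with the pure weight-$k$ automorphy factor, and its coefficients remain in $\ok$.

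The main obstacle is the level bookkeeping that forces the square $R^{2}$: it is the combined cost of clearing the denominator $R$ in $\lambda/R$ through the Jacobi-index character $e(cS^{*}[z]/(c\tau+d))$ \emph{and} through the quasi-periodicity shift, and ensuring that the unwanted exponential factors cancel exactly---without leaving a nontrivial Dirichlet character or forcing an even larger level. This is also the reason $R$ must be ``sufficiently large'' in the statement: it must exceed the $\mu$-diameter used to isolate $T^{*}$ and be divisible by suitable small integers so that the exponent $c(d-2)S^{*}[\lambda]/R^{2}$ is genuinely integral for every admissible $\lambda$.
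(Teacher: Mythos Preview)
Your construction is correct. The orthogonality step and the valuation bookkeeping are clean, and your level calculation actually checks out: with $\alpha=\lambda c/R$ and $\beta=\lambda(d-1)/R$ the $\tau$-dependent phases cancel and the residual exponent is exactly $c(d-2)S^{*}[\lambda]/R^{2}$, which is integral once $R^{2}\mid c$. (A small remark: since $S^{*}\in\Lambda_{n-1}$ one has $S^{*}[\lambda]\in\mf Z$ for every $\lambda\in\mf Z^{n-1}$, so your parenthetical about taking $R$ even is unnecessary.) The preliminary $\Phi$-reduction to force $S^{*}\in\Lambda_{n-1}^{+}$ is harmless because the conclusion only concerns an elliptic modular form.

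That said, the paper (following \cite{bona}) takes a genuinely different route. Instead of fixing the lower block $S^{*}$ exactly and sieving the off-diagonal vector $\mu\bmod R$ via torsion-point evaluation of the Jacobi form, it sieves the \emph{entire} matrix by forming the twisted Siegel form
\[
F^{(R,T_{0})}(Z)=\sum_{T\equiv T_{0}\bmod R} a_{F}(T)\,e(TZ)\in M^{n}_{k}(\Gamma^{n}_{1}(NR^{2})),
\]
restricts $Z$ to the diagonal $\mrm{diag}(\tau_{1},\ldots,\tau_{n})$, and then extracts the $q_{2}^{d_{2}}\cdots q_{n}^{d_{n}}$-coefficient to get $f(\tau_{1})$, so that $a(f,r)=\sum_{T\equiv T_{0}\bmod R,\ \mrm{diag}(T)=(r,d_{2},\ldots,d_{n})}a_{F}(T)$ and $a(f,d_{1})=a_{F}(T_{0})$. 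This has two mild advantages over your approach: it never needs a positive-definite sub-block (so no $\Phi$-step), and the level $NR^{2}$ drops out immediately from the standard twisting machinery rather than from a Jacobi-automorphy computation. Your approach, on the other hand, meshes more naturally with the Fourier--Jacobi and theta-component language used later in the paper. Both isolate the minimising coefficient at a single index of $f$ and yield $\nu_{\mk l}(f)=\nu_{\mk l}(F)$ for all $R$ beyond an explicit threshold.
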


For future use, let us briefly recall the setting of the proof of the above proposition, in a slight generality. Let $\mc P$ be a certain property satisfied by some of the Fourier coefficients of $F$.

We put $\mc M(T):= \max\{ T_{1,1}, \ldots,  T_{n,n} \}$. As in \cite{bona}, we consider the set
\begin{equation*}
\mc T =\{ T \in \Lambda_n | a_F(T) \text{ satisfies } \mc P    \}.
\end{equation*}
We let $d$ to be the minimum of the quantities $\mc M(T)$ for $T \in \mc T$. Then we fix any $T_0 \in \mc T$ with $\mc M(T_0)=d$ and put $diag(T_0)=(d_1,\ldots d_n)$. 

We now consider the finite set
\begin{equation} \label{td}
\mc T_d =\{ T \in \mc T | \mc M(T)=d    \}.
\end{equation}
We next choose $R \ge 1$ `large enough' (possibly with suitable additional conditions) such that
$\{ T \in \mc T_d | T \equiv T_0 \bmod R \} = \{T_0 \}$.

We then put, borrowing the notation from \cite{bona} 
\begin{equation} \label{Fr}
F^{(R,T_0)} (Z) := \sum_{T \equiv T_0 \bmod R} a_F(T) e(TZ),
\end{equation}
and define 
\begin{equation} \label{fdef}
f(\tau) = \sum_{r=0}^\infty a(f,r)q^r \in M^1_k(\Gamma_1(NR^2))
\end{equation}
by
\begin{equation} \label{maineq1}
 a(f,r) = \sum_{T \equiv T_0 \bmod R, \, diag(T) = (r, d_2,\ldots,d_n)} a_F(T) ;
\end{equation}
then one has
\begin{equation} \label{maineq2}
a(f,d_1) = a_F(T_0).
\end{equation}

We now show that an adaptation of this technique to our setting can be used to show that any non-zero $F$ as above has infinitely many $\glnz$-inequivalent `primitive' Fourier coefficients which are non-zero $\ml$. This generalises previous results on this topic by Zagier \cite{zag}, Yamana \cite{yam} where the Fourier coefficients were in $\mf C$.

We need a bit of more notation. For $T \in \Lambda_n^+$ ($n \ge 2$), let $\mc D(T)$ denote the greatest common divisor of all the diagonal elements of $T$ except the first:
\begin{equation}
\mc D(T) = \gcd(T_{2,2}, \ldots, T_{n,n}).
\end{equation}
Note that $\mc D(T) \in \mf N$ and that if $c(T)$ denotes the content of $T$, then $c(T) \mid \mc D(T)$.

\begin{thm} \label{ymodp}
Let $F$ as in \propref{bo-na}. Suppose there exist $T \in \Lambda_n$ such that $a_F(T) \nl$ with $(\mc D(T),N)=1$. Then there exist infinitely many $\glnz$-inequivalent primitive matrices $T \in \Lambda_n$ such that $a_F(T) \nl$ for all $\ell$ (lying under $\mk l$) effectively large enough in terms of only $k,N$; in particular we request $\ell>k+1$.
\end{thm}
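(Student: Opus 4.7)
The plan adapts the Böcherer--Nagaoka descent (\propref{bo-na}) to target primitivity via a tailored property $\mc P$, feeds the resulting elliptic form into the mod $\ell$ old-form theorem (\thmref{emf}), and keeps every threshold effective in $k,N$ by invoking the Siegel Sturm bound (\propref{siegel-sturm}) on an auxiliary sieved form.

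Let $\mc P(T)\equiv$ ``$a_F(T)\nl$ and $(\mc D(T),N)=1$'' and $\mc T:=\{T\in\Lambda_n:\mc P(T)\}$, non-empty by hypothesis. To make the ensuing construction effective in $k,N$, I would first form an auxiliary modular form $F^{*}\in M^n_k(\Gamma^n_1(N_1))(\ok)$ (for an explicit $N_1$ depending only on $N$) by Möbius inclusion--exclusion over shifts of $Z_{ii}$ for $i\ge 2$, so that
\[
a_{F^{*}}(T)=\begin{cases} a_F(T) & \text{if } (\mc D(T),N)=1,\\ 0 & \text{otherwise.}\end{cases}
\]
Since $\mc T\neq\emptyset$, $F^{*}\nl$, and \propref{siegel-sturm} furnishes $T_0\in\mc T$ with $\mc M(T_0)\le\mc S^n(k,N_1)$. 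Set $d:=\mc M(T_0)$ and $\mrm{diag}(T_0)=(d_1,\dots,d_n)$, and choose $R\in\mf N$ with $\gcd(R,2N\ell\,\mc D(T_0))=1$ and large enough that $\{T\in\mc T_d : T\equiv T_0\bmod R\}=\{T_0\}$ (possible because $\mc T_d$ from \eqref{td} is finite with entries bounded in terms of $k,N$). The descent \eqref{Fr}--\eqref{maineq1} then produces $f\in M_k(\Gamma_1(NR^2))(\ok)$ with $a(f,d_1)=a_F(T_0)\nl$, so $f$ is non-singular $\bmod\mk l$.

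Now set $Q:=\mc D(T_0)$, coprime to $NR^2$ by construction. For $\ell$ above the effective threshold of section~\ref{mij} for the pair $(k,NR^2)$ (which inherits effectivity in $k,N$ only), \thmref{emf}(a) applied to $f$ produces infinitely many $r\ge 1$ with $(r,Q)=1$ and $a(f,r)\nl$. Unfolding \eqref{maineq1} for each such $r$ yields $T_r\in\Lambda_n$ with $T_r\equiv T_0\bmod R$, $\mrm{diag}(T_r)=(r,d_2,\dots,d_n)$, and $a_F(T_r)\nl$. Since $c(T_r)$ divides every diagonal entry, $c(T_r)\mid\gcd(r,d_2,\dots,d_n)=\gcd(r,\mc D(T_0))=1$, so each $T_r$ is primitive. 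For $\glnz$-inequivalence, I would use that $\det$ is a $\glnz$-invariant: writing $T_r=\begin{psm}r & v^t\\ v & T_r^{(1)}\end{psm}$, the Schur relation $\det(T_r)=\det(T_r^{(1)})\bigl(r-v^t(T_r^{(1)})^{-1}v\bigr)$, combined with the finite menu of allowed $T_r^{(1)}$ and the coset constraint $v\in v_0+R\mf Z^{n-1}$ within $|v_j|\le\sqrt{r\,d_j}$, together with the quantitative lower bound $\pi_Q(f,x)\gg x/(\log x)^{\alpha(f)}$ from \propref{ellasy}, forces $\det(T_r)$ to attain infinitely many distinct values as $r$ ranges over the infinite set; this produces infinitely many $\glnz$-orbits.

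The hypothesis $\ell>k+1$ enters through \lemref{sing}: once $\ell$ is sufficiently large in terms of $k$, no $\mk l$-singular reductions of $F^{*}$ at sub-maximal ranks can occur, so the $T_0$ produced by \propref{siegel-sturm} is full rank and $T_r^{(1)}$ in the determinant step is automatically positive definite. The main obstacle---and the delicate step in making the threshold on $\ell$ depend only on $k,N$---is the construction and integrality analysis of the sieve $F\mapsto F^{*}$: one must verify that the Möbius inclusion--exclusion produces a bona fide Siegel modular form on a congruence subgroup $\Gamma^n_1(N_1)$ with $N_1$ effective in $N$ and with Fourier coefficients in $\ok$, and that the resulting Sturm bound $\mc S^n(k,N_1)$ feeds cleanly into the level-raising $NR^2$ of the descent. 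This is standard but technical, and is precisely what allows every subsequent threshold---including the one from section~\ref{mij}---to be expressed purely in terms of $k$ and $N$.
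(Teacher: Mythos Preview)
Your descent via \propref{bo-na} followed by \thmref{emf} is exactly the paper's route, and your M\"obius sieve on the diagonal entries $Z_{ii}$ ($i\ge 2$) is a legitimate variant of the paper's Andrianov character twist (the paper obtains a form on $\Gamma^n_1(N^2)$ directly; your sieve introduces denominators that are powers of primes dividing $N$, harmless once $\ell\nmid N$). The real gap is in the $\glnz$-inequivalence step. If only finitely many determinant values $D_1,\dots,D_m$ occurred among the $T_r$, then each admissible $r$ would satisfy $r=v^t S^{-1}v + D_j/\det S$ for some $S=T_r^{(1)}$ in your finite menu and some $v\in v_0+R\mf Z^{n-1}$; that is, $r$ would lie in the value set of one of finitely many shifted positive-definite quadratic forms in $n-1$ variables restricted to a sublattice. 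For $n-1\ge 3$ such a value set contains all sufficiently large integers in suitable residue classes, and already for $n-1=2$ it has size $\asymp x/\sqrt{\log x}$ by Landau--Bernays. Since \propref{ellasy} only guarantees $\pi_Q(f,x)\gg x/(\log x)^{\alpha(f)}$ with $\alpha(f)$ possibly as large as $3/4$, no contradiction results once $n\ge 3$; your counting argument works cleanly only for $n=2$. The paper does not attempt to count: it notes that the descent produces \emph{one} primitive $T$ with $a_F(T)\nl$ and then invokes a separate ``empty-or-infinite'' principle (\propref{primfin}), whose proof runs through the theta decomposition of the Fourier--Jacobi coefficients $\phi_S$ and the fact that an elliptic modular form with only finitely many nonzero Fourier coefficients $\bmod\,\mk l$ is constant $\bmod\,\mk l$. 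This Jacobi-theoretic step is the missing ingredient.

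A smaller point: \lemref{sing} only says that $F^{*}$ is not $\mk l$-singular for $\ell$ large, i.e., that \emph{some} full-rank $T$ has $a_{F^*}(T)\nl$. It does not force the specific $T_0$ selected by the Sturm bound to be positive definite, so your claim that ``$T_r^{(1)}$ is automatically positive definite'' is not justified. The paper handles the possibility $\mrm{rank}(T_0)=s<n$ by applying $\Phi^{n-s}$ and arguing in degree $s$, where \propref{primfin} again applies.
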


\begin{proof}
We will freely refer to the discussion preceeding this proposition. Here the property $\mc P$ is that $a_F(T) \nl$ and $(\mc D(T),N)=1$. Fix a $T=\mf T$ with this property.

The next three paragraphs are meant to show how to effectively bound the integer $R$ from \eqref{Fr} in our situation. This would then tell us how large our $\ell$ has to be.
 
With such a $\mf T$ chosen, we now claim the existence of $T_0>0$ with $a_F(T_0) \nl$, $(\mc D(T_0), N)=1$ and $\mc M(T_0)=\max_i \{ d_i \} \le \mc S^n(k,N^2) $, where $\mc S^n(k,N)$ denotes the Hecke-Sturm bound for $\mnk$ and we have put $diag(T_0) = (d_1, d_2, \ldots, d_n)$. 

Suppose to the contrary. Taking $\mf{1}_N$ to be the trivial Dirichlet character $\bmod N$ and $L=L^t \in M_n(\mf Z)$ satisfying the conditions:
\[ L_{i,j} \equiv 0 \bmod N \, (i \ne j), \, L_{1,1} \equiv 0 \bmod N, \, \sum_{j=2}^n L_{j,j} d_j = \gcd(d_2 \cdots d_n)=\mc D(\mf T); \] 
we then consider the Fourier series
\begin{equation} \label{Gf}
G(Z) = \sum_T \mf{1}_N(\mrm{tr}(LT)) a_F(T)e(TZ).
\end{equation}
From Andrianov \cite{andria} we know that $G \in M^n_k(\Gamma)$, which is contained in $M^n_k(\Gamma_1(N^2))$ in view of the inclusions $\Gamma_1(N^2) \subset \Gamma \subset \Gamma_1(N)$, with $\Gamma = \{\gamma =  \begin{psm} A&B \\ C&D \end{psm} \in \Gamma_1(N) | C \equiv 0 \bmod N^2  \}$. Further our hypothesis on $F$ shows that $ G \nl$ (indeed $a_G(\mf T) \nl$). 

By using the Sturm bound for $\Gamma^n_1(N^2)$, we get the existence of $T_0$ as claimed above, because the Fourier expansion of $G$ in \eqref{Gf} is supported only on those $T$ for which $(\mc D(T),N)=1$. 

We work with this $T_0$ as per the set-up described following \propref{bo-na} and also assume (without loss) that $\mc M(T_0)=d$ is the minimum with respect to the property $\mc P$. We then consider $\mc T_d$ as in \eqref{td} and proceed to choose $R$ suitably.

We require three properties of such an $R$: it should be effectively bounded in terms of $k,N$; should be coprime to $ \mc D(T_0)$; and should be big enough so that the set $\{ T \in \mc T_d | T \equiv T_0 \bmod R , (\mc D(T),N)=1\} = \{T_0 \}$. We note that the following choice $R = ([2 \mc S(k,N^2)] +1) \cdot  \mc D(T_0) +1$is good. This will ensure that one of the diagonal congruences for $ T \equiv T_0 \bmod R$ does not hold in view of the definition of $\mc T_d$; further this choice also ensures that $(R,\mc D(T_0))=1$.

Next, we consider the modular form $f \in \Gamma_1(NR^2)$ from \eqref{fdef} with Fourier expansion as in \eqref{maineq1} obtained from our $F$. Recall that by
construction $\nu_{\mk l}(f) = 0$, and we consider the Fourier coefficients $a(f,r)$ of $f$. Since $\ell>k+1$, $f$ is not singular $\ml$.
From \eqref{maineq2} it follows that if $a(f,r) \nl$, there must exist $T \equiv T_0 \bmod R$
with $a_F (T) \nl$ such that $diag(T) = (r, d_2, . . . , d_n) = (r,\mc D(T_0)) $. Since $(\mc D(T_0), NR^2)=1$ by our construction, we can apply \thmref{emf} with $Q:= \mc D(T_0)$ (note that the level of $f$ divides $NR^2$) to deduce that there must exist infinitely many $r$ such that $a_f(r) \nl$ such that $(r, \mc D(T_0))=1$. 

The crucial point is that for all the Fourier coefficients $a_F(T)$ ($T \equiv T_0 \bmod R$) of $F^{(R,T_0)}$  from \eqref{Fr}, one has $\mc D(T)=\mc D(T_0)$ ($= (d_2, \ldots,d_n)$). Therefore we get the existence a sequence of infinitely many matrices $T$ such that $a_F(T) \nl$ with the property that their diagonal entries have $gcd$ to be $1$. This implies that the $T$ under consideration are all primitive.

Unfortunately this \textsl{does not} imply that all the primitive $T$ obtained in this way are inequivalent under the action of $\glnz$. However if $T \in \Lambda_n^+$, we can directly invoke \propref{primfin}, whose proof however is deferred until the end of this proof, to get hold of infinitely many such matrices which are pairwise distinct $\bmod\, \glnz$. Otherwise if $\mrm{rank}(T)=s <n$, we can find $U \in \glnz$ such that $T[U] = \begin{psm} \widetilde{T} & 0 \\ 0 & 0 \end{psm}$. Then we can consider $G(Z):= \Phi^{n-s}(F)(Z) = \sum_{S \in \Lambda_s} a_F(\begin{psm} S & 0 \\ 0 & 0 \end{psm}) e(SZ)$, ($Z \in \mf H_s$). Clearly $G \nl$ and in particular $a_G(\widetilde{T}) \nl$ with $\widetilde{T} \in \Lambda_s^+$ primitive. Thus we can again invoke \propref{primfin} to $G$ to conclude the existence of infinitely many primitive $S \in \Lambda_s^+ / \mrm{GL}(s, \mf Z)$ such that $a_G(S) \nl$. To conclude the same result for $F$, note that the matrices $\begin{psm} S & 0 \\ 0 & 0 \end{psm}$ obtained above as also pairwise distinct $\bmod\, \glnz$. This follows from the statement that $S_1,S_2 \in \Lambda_s^+$ are $\mrm{GL}(s, \mf Z)$ equivalent if and only if $\begin{psm} S_1 & 0 \\ 0 & 0 \end{psm}$ and $\begin{psm} S_2 & 0 \\ 0 & 0 \end{psm}$ are $\glnz$ equivalent. One side of the implication is trivial, to see the other; suppose that $U= \begin{psm} U_1 & U_2 \\ U_3 & U_4 \end{psm} \in \glnz$ be such that
\[    \begin{pmatrix} U^t_1 & U^t_3 \\ U^t_2 & U^t_4 \end{pmatrix}  \begin{pmatrix} S_1 & 0 \\ 0 & 0 \end{pmatrix} \begin{pmatrix}
U_1 & U_2 \\ U_3 & U_4 \end{pmatrix}   =  \begin{pmatrix} S_2 & 0 \\ 0 & 0 \end{pmatrix} .  \]
A short calculation shows that $U_1^tS_1U_1 = S_2$ and $U_2^t S_1U_2=0$. The positive-definiteness of $S_1$ forces $U_2=0$ and thus $U_1 \in \mrm{GL}(s, \mf Z)$. Since the content is preserved under the action of $\glnz$, we are therefore done.

While applying \thmref{emf} we needed to ensure that $\ell $ is large enough only in terms of $k,N,R$. But since $\mc D(T_0)$, and hence $R$ (see our choice of $R$) can be estimated (explicitly) as a polynomial in $k,N$, $\ell$ is large enough depending only on $k,N$. We do not work this out.
\end{proof}

It remains to prove \propref{primfin}. Let us recall the Fourier-Jacobi expansion of $F$ of type $(1,n-1)$ from \eqref{fj}. Let $\phi_S$ be the Fourier-Jacobi coefficients. By tacitly identifying $\phi_S$ with the associated function $\varphi_S = \phi_S \cdot e(SZ)$ (cf. section~\ref{prelim}), we refer to the Fourier coefficients of $\phi_S$ as supported on matrices of the form $\begin{psm} * & * \\ * & S \end{psm} \Lambda_n$.
From \eqref{jacobi2} the Fourier expansion of the theta components of 
\begin{equation} \label{hrecap}
 h_{\mu_0}(\tau)=\sum_r b(r,\mu_0) e (r-S^{-1}[\mu_0/2])\cdot \tau). 
\end{equation} 

\begin{lem} \label{jacprim}
With the above notation, suppose that for all $S \in \Lambda^+_{n-1}$ and all $\mu \in \mf Z^{n-1}/ 2S \mf Z^{n-1}$ we have $b(r,\mu)\nl$ only for finitely many values of
$r-S^{-1}[\mu]$ with $r$ coprime to $\gamma(\mu)$, where $\gamma(\mu):=  \gcd (\mu, c(S) )$. Then the Fourier expansion of $\phi_S \ml$ is supported on matrices $T = \begin{psm} r & \mu^t \\ \mu & S \end{psm} \in {\Lambda_n}$,
with $\mrm{rank}(T)\leq n-1.$
\end{lem}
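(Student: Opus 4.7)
Since $T = \begin{psm} r & \mu^t \\ \mu & S \end{psm}$ satisfies $\det(T) = \det(S)\cdot(r - S^{-1}[\mu])$ by the Schur complement, and $S$ has rank $n-1$, one has $\mrm{rank}(T) \le n-1$ precisely when $r = S^{-1}[\mu]$. The conclusion of the lemma is therefore equivalent to: for every coset representative $\mu_0 \in \mf Z^{n-1}/2S\mf Z^{n-1}$, the theta component $h_{\mu_0}$ satisfies $b(r, \mu_0) \oml$ whenever $r - S^{-1}[\mu_0] > 0$. The plan is to treat each $h_{\mu_0}$ as an elliptic modular form and invoke the contrapositive of \thmref{emf} / \corref{oldmod}.

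First I would fix $\mu_0$. Since $h_{\mu_0}$ has half-integral weight $k - (n-1)/2$ and its Fourier expansion is indexed by $r - S^{-1}[\mu_0] \in \tfrac{1}{\det(2S)}\mf Z$, I pass to a power $H_{\mu_0} := h_{\mu_0}^{D}$ with $D = 2\det(2S)$, so that by the squaring argument of \lemref{sing} (iterated) one gets $H_{\mu_0} \in M_\ast(\Gamma_1(N') \cap \Gamma_0(M'))$ with nebentypus, integer weight, and an integer-indexed Fourier expansion. Its $m$-th Fourier coefficient is a $D$-fold convolution of the sequence $(b(r, \mu_0))_r$. Using the invariance \eqref{jacobi3} one checks that both $\gamma(\mu)$ and the residue class of $r \bmod \gamma(\mu)$ are constants of the Heisenberg orbit of $(r,\mu)$, so restricting to the single representative $\mu_0$ loses no information.

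Under the hypothesis, the set $\{r : b(r,\mu_0) \nl,\ (r,\gamma(\mu_0))=1\}$ is finite; together with absorbing the denominator of the rational shift $S^{-1}[\mu_0]$ into the level of $H_{\mu_0}$, this forces all but finitely many Fourier coefficients of $H_{\mu_0}$ at integer indices $m$ coprime to a suitable $Q = Q(\gamma(\mu_0), \det(2S), N'M')$ to vanish $\ml$. For $\mk l$ admissible for $H_{\mu_0}$ in the sense of Section~\ref{noncongana}, the contrapositive of \thmref{emf} then forces $H_{\mu_0}$ to be singular $\ml$; expanding $h_{\mu_0}^{D}$ about its bottom coefficient shows that a non-vanishing $b(r,\mu_0) \nl$ with $r - S^{-1}[\mu_0] > 0$ would produce a non-vanishing coefficient of $H_{\mu_0}$ at some positive integer index, a contradiction. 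Hence $h_{\mu_0}$ mod $\mk l$ is supported only at $r = S^{-1}[\mu_0]$; running over all $\mu_0$ gives the lemma.

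The main obstacle is the bookkeeping of coprimalities across the rational index shift $r \mapsto r - S^{-1}[\mu_0]$ combined with the $D$-th power: the hypothesis is phrased in terms of $r \in \mf Z$ coprime to $\gamma(\mu_0)$, whereas \thmref{emf} applies to the integer Fourier index $m$ of $H_{\mu_0}$ coprime to a chosen $Q$. Tracking denominators carefully (via the power $D$ and the enlargement of the level to $N'M'$), and ensuring that $\mk l$ remains admissible for $H_{\mu_0}$ in the sense of Section~\ref{noncongana}, constitutes the technical core; once this is in place, the reduction to elliptic modular forms and the final contradiction are fairly clean.
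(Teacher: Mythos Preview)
Your reduction has a genuine gap at the step ``this forces all but finitely many Fourier coefficients of $H_{\mu_0}$ at integer indices $m$ coprime to a suitable $Q$ to vanish $\ml$''. Taking the $D$-th power mixes coefficients by convolution: the $m$-th coefficient of $h_{\mu_0}^D$ is a sum of products $\prod_j b(r_j,\mu_0)$ over decompositions, and the hypothesis only controls those $r_j$ coprime to $\gamma(\mu_0)$. There may be \emph{infinitely} many $r$ not coprime to $\gamma(\mu_0)$ with $b(r,\mu_0)\nl$, and these contribute to convolution products at indices $m$ coprime to any $Q$ you choose. So the finiteness hypothesis does not pass to $H_{\mu_0}$, and \thmref{emf} cannot be invoked. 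Relatedly, you are aiming for the conclusion that $h_{\mu_0}$ itself is supported at $r=S^{-1}[\mu_0]$ modulo $\mk l$, which is stronger than what the hypothesis warrants (and stronger than what the paper establishes or needs).

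The paper sidesteps both issues by never taking powers. Instead it passes to the \emph{subseries} $h_{\mu_0}^*$ of $h_{\mu_0}$ obtained by keeping only those $r$ with $\gcd(r,\gamma(\mu_0))=1$; the invariance \eqref{jacobi3} shows this coprimality is Heisenberg-stable, so $\phi_S^*:=\sum_{\mu_0} h_{\mu_0}^*\Theta_S[\mu_0]$ is still a legitimate theta expansion and each $h_{\mu_0}^*$ is still a modular form (extracting an arithmetic progression in the index). Now the hypothesis says directly that $h_{\mu_0}^*$ has only finitely many nonzero Fourier coefficients $\ml$, and one applies the elementary fact (Serre, or \cite[Thm.~3.1]{boki}) that such a form is constant $\ml$ --- no need for \thmref{emf}, no admissibility condition on $\mk l$, and no integer-weight reduction. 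The conclusion is then about $\phi_S^*$ (the primitive part), which is exactly what \propref{primfin} requires.
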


\begin{proof}
We start from the theta expansion \eqref{jacobi1} of $\phi_S$.
We observe that for a Fourier series as in \eqref{jacobi2} the invariance property \eqref{jacobi3}
is equivalent to the possibility writing down a "theta expansion" as in (\ref{jacobi1}). This observation will be used soon.

We put $b(r,\mu)^*:= b(r,\mu)$ if the gcd of $r$, $\mu$ and of the content $c(S)$ is one and we define 
$b(r,\mu)$ 
to be zero otherwise.
We observe that the gcd of $r$, $\mu$ and of $c(S)$  is the same
as the gcd
of $r+L^t\cdot \mu+S[L]$, $\mu+2S\cdot L$ and of   $c(S)$.
This implies that the subseries of $\phi_S$, defined by 
\[ \phi_S^{*}(\tau,z)= \sum_{r,\mu} b(r,\mu)^* e(r\tau +\mu^t z) \]
still has an expansion (keeping in mind the observation from the first paragraph)
\[ \phi^*_S( \tau,z)=\sum_{\mu_0} h^{*}_{\mu_0}(\tau)\Theta_S[\mu_0](\tau,z) \]
where the $h^{*}_{\mu_0}$ are given by subseries of the Fourier expansion of $h_{\mu_0}$,
more precisly, we have
\[
h^*_{\mu_0}(\tau)=\sum_r b(r,\mu_0)^*e(r-S^{-1}[\frac{\mu_0}{2}])\cdot \tau).
\]
For fixed $\mu_0$ we may rephrase the condition defining $b(r,\mu_0)^*$  as
saying that the gcd of $ r$ and $\gamma(\mu_0)$ is one where 
$\gamma(\mu):=$  gcd of $\mu$ and of  $c(S) $.   

This can be rephrased in terms of the summation index
$r-S^{-1}[\frac{\mu_0}{2}]$ by
$$r-S^{-1}[\frac{\mu_0}{2}]\notin -S^{-1}[\frac{\mu_0}{2}] +q\cdot {\mathbb Z}$$
for any prime $q$ dividing $\gamma(\mu_0)$. In particular, $h_{\mu_0}^*$ is still a modular form
of some level  because it is extracted from the modular form $h_{\mu_0}$ by some coprimality condition
for its summation index .

By hypothesis, for all $\mu_0$ that  $b(r,\mu_0)\ne 0\bmod p$ only for finitely many 
$r-S^{-1}[\mu_0]$ with $r$ coprime to $\gamma(\mu_0)$. 
Then by  \cite{serre1} or Theorem 3.1 in \cite{boki} the $h^{*}_{\mu_0}$ are constant functions $\ml$:  
\[\phi_S^{*}\equiv  \sum b(r,\mu_0)\Theta_S[\mu_0](\tau,z)   \ml \]
where  $r$ and $\mu_0$ satisfy $r=S^{-1}[\frac{\mu_0}{2}]$ (see \eqref{hrecap}) and $(r,\gamma(\mu_0))=1$. In particular, the Fourier expansion of $\phi_S^{*} \ml$ is supported on matrices $T\in {\Lambda^n}$,
with $\mrm{rank}(T)\leq n-1$.
\end{proof}

\begin{prop} \label{primfin}
If $F\in M_k^n(\Gamma_1(N))$, be such that $F \nl$; then the set
\[
\{T\in \Lambda^+_n\,\mid T \text{  primitive  }, a_F(T) \nl\}
/\glnz
\]
is either empty or infinite.
\end{prop}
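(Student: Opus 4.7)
The plan is to argue by contradiction: assume that the set in question is nonempty and finite, and deduce a contradiction by way of \lemref{jacprim}. Throughout, we work with the Fourier-Jacobi expansion of $F$ of type $(1, n-1)$ from \eqref{fj}, writing every $T \in \Lambda_n^+$ uniquely as $T = \begin{psm} r & \mu^t \\ \mu & S \end{psm}$ with $S \in \Lambda^+_{n-1}$, $\mu \in \mathbb{Z}^{n-1}$, $r \in \mathbb{N}_0$, so that $a_F(T) = b(r,\mu)$ in the notation of \eqref{jacobi2}, and $\det(T) = \det(S) (r - S^{-1}[\mu])$.

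First, I would verify that finiteness of the set forces the hypothesis of \lemref{jacprim}. The key observation is that the content of $T = \begin{psm} r & \mu^t \\ \mu & S \end{psm}$ satisfies $c(T) = \gcd(r, 2\mu, c(S))$, so in particular $T$ primitive implies $\gcd(r, \mu, c(S)) = 1$, i.e.\ $r$ coprime to $\gamma(\mu)$. Now fix $S \in \Lambda^+_{n-1}$ and $\mu \in \mathbb{Z}^{n-1}/2S\mathbb{Z}^{n-1}$. Distinct values of $r$ satisfying $r > S^{-1}[\mu]$ produce $T$'s of pairwise distinct determinants, hence pairwise $\mathrm{GL}(n,\mathbb{Z})$-inequivalent positive-definite matrices. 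Therefore, were there infinitely many $r$ with $\gcd(r,\gamma(\mu))=1$ and $b(r,\mu) \not\equiv 0 \bmod \mathfrak{l}$, we would obtain infinitely many $\mathrm{GL}(n,\mathbb{Z})$-inequivalent primitive matrices in $\Lambda_n^+$ with nonvanishing Fourier coefficient modulo $\mathfrak{l}$, contradicting the assumed finiteness. Hence for every such $(S, \mu)$, only finitely many admissible $r$ occur, which is exactly the hypothesis of \lemref{jacprim}.

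Next I would apply \lemref{jacprim} to each $\phi_S$ with $S \in \Lambda^+_{n-1}$. Its conclusion says that the "primitive part" $\phi_S^*$ of $\phi_S$ is, modulo $\mathfrak{l}$, supported on matrices $T$ of rank at most $n-1$. Since $S > 0$, any $T = \begin{psm} r & \mu^t \\ \mu & S \end{psm}$ of rank $\leq n-1$ is necessarily not positive definite. Consequently, for every $S \in \Lambda^+_{n-1}$ and every primitive $T \in \Lambda_n^+$ with lower-right block $S$, we must have $a_F(T) \equiv 0 \bmod \mathfrak{l}$. But if the set under consideration is nonempty, pick any $T_0$ in it; then $T_0 > 0$ forces its lower-right $(n-1) \times (n-1)$ block $S_0$ to lie in $\Lambda^+_{n-1}$, and the preceding sentence yields $a_F(T_0) \equiv 0 \bmod \mathfrak{l}$, the desired contradiction.

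The main obstacle is precisely the first step: carefully checking that the assumed finiteness, which is phrased modulo the full group $\mathrm{GL}(n,\mathbb{Z})$, implies the hypothesis of \lemref{jacprim}, which is phrased only in terms of $r$ for fixed $S, \mu$. This is resolved by the determinant invariant, as above. The rest is a direct application of \lemref{jacprim} and the bookkeeping between primitivity of $T$ and the coprimality condition $\gcd(r, \gamma(\mu))=1$ that features in that lemma; the case $n=1$ is vacuous (every $T \in \Lambda_1^+$ with $a_F(T) \ne 0 \bmod \mathfrak{l}$ is either absent or there are infinitely many by the $n=1$ elliptic result \thmref{emf}) and could be handled separately at the outset if desired.
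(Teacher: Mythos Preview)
Your proposal is correct and follows essentially the same route as the paper: assume a nonempty finite set of representatives, use the determinant formula $\det T=\det S\cdot(r-S^{-1}[\mu/2])$ to force, for each fixed $(S,\mu)$, only finitely many admissible $r$ with $(r,\gamma(\mu))=1$ and $b(r,\mu)\nl$, and then invoke \lemref{jacprim} for the contradiction. The only cosmetic difference is that the paper fixes representatives $L_1,\dots,L_t$ and reads off the finitely many possible values of $r-S^{-1}[\mu]$ from $\det L_i/\det S$, whereas you phrase the same step as a contrapositive via ``distinct $r$ give distinct determinants, hence inequivalent $T$''; also note that the relation $c(T)=\gcd(r,\mu,c(S))$ is in fact an equivalence with $(r,\gamma(\mu))=1$ (not just the one-sided implication you state), which is what you actually use when asserting that the resulting $T$'s are primitive.
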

\begin{rmk}
The same statement, but without primitivity condition appears in \cite{boki}.
\end{rmk}

\begin{proof}
We assume that the set in question is indeed non-empty and finite with
$\{L_1\dots L_t\}$ ($t \ge 1$) as a set of representatives.
Let $\phi_S$ be the Fourier-Jacobi coefficients of $F$ of type $(1,n-1)$.
Then $\Phi_S$ can carry a $\ml$ nonzero primitive Fourier coefficient only
if $S=L_i[G]$ for some primitive $G\in {\mf Z}^{(n,n-1)}$.
 Furthermore, for fixed $S$ we observe that $ \det L_i = \det(S) \cdot (r-S^{-1}[\mu])$
and hence $r-S^{-1}[\mu_0]$ is from a finite set. Since $L_i$ is primitive, necessarily $\gcd(r, \gamma(\mu))=1$, where recall that $\gamma(\mu) = \gcd(\mu^t,c(S))$.
We may now apply \lemref{jacprim} to see that the rank condition for the $L_i$ cannot be satisfied. This contradiction finishes the proof.
\end{proof}

\begin{rmk}
The reasoning above also works in the archimedean setting.
\end{rmk}

\subsection{Quantitative results: Siegel modular forms} \label{siegquanta}
In this subsection, we collect various quantitative results on the number of nonvanishing Fourier coefficients $\ml$ of Siegel modular forms, which essentially follow from the corresponding statements about elliptic modular forms. Let $F  \in \mnk (\mc O_K)$, and define the sets
\begin{align}
\pi_F(x, \det) &:= \{ d \le x |   a_F(T) \nl \text{ for some } T \in \Lambda^+_n \text{ such that } \det(T) =d \} \\
\pi_F(x, \det; \mrm{sf}) &:= S_F(x, \det) \cap \{ \text{Odd square-free numbers} \}\\
\pi_F(x, \det; \mrm{pr}) &:= S_F(x, \det) \cap \{ \text{Odd prime numbers} \}\\
\pi_F(x, \tr) & :=
\{ d \le x |   a_F(T) \nl \text{ for some } T \in \Lambda^+_n \text{ such that } \tr(T) =d  \}
\end{align}

\begin{thm} \label{smodp}
Let $F \in \mnk$ be such that $F$ be non-singular $\ml$. Then for some $0< \beta(F) \le 3/4$,
\begin{align}
&(a1) \q | \pi_F(x, \det)| \gg_F x/(\log x)^{\beta(F)}  \q (n \text{ odd} ), \\
&(a2) \q | \pi_F(x, \det)| \gg_F \sqrt{x}/(\log \log x)  \q (n \text{ even}  ), \\
& (b)  \q | \pi_F(x, \det; \mrm{sf}) | \gg  x/(\log x)^{\beta(F)} \q (n \text{ odd, } N=1, k \ge (n+3)/2,\ell \gg_{F} 1)\\
& (c)  \q | \pi_F(x, \det; \mrm{pr}) | \gg  x/(\log x) \q (n \text{ odd, } N=1, k \ge (n+3)/2).
\end{align} 
\end{thm}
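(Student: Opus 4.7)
The plan for all four parts is to reduce the Siegel counting problem, via the Fourier--Jacobi expansion \eqref{fj} of $F$ of type $(1,n-1)$, to a quantitative non-vanishing count for a single elliptic modular form, and then to invoke \cite{j,js,jsh} or the lower bounds of section~\ref{ellquan} (i.e.\ \propref{ellasy}).

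For the reduction, since $F$ is non-singular $\ml$ there is some $T_\star \in \Lambda_n^+$ with $a_F(T_\star) \nl$; letting $S_0$ be its lower-right $(n-1)\times(n-1)$ block, the Fourier--Jacobi coefficient $\phi_{S_0}$ is non-zero $\ml$, and the theta decomposition \eqref{jacobi1} singles out some $h_{\mu_0} \nl$ — an elliptic modular form of weight $k-(n-1)/2$ (integer weight if $n$ is odd, half-integer if $n$ is even) on a congruence subgroup whose level depends only on $S_0$ and $N$. By \eqref{jacobi2}--\eqref{hrecap}, the coefficient of $h_{\mu_0}$ at index $m$ equals $a_F(T_m)$, where
\[
T_m = \begin{psm} m + S_0^{-1}[\mu_0/2] & \mu_0^t/2 \\ \mu_0/2 & S_0 \end{psm} \in \Lambda_n^+
\]
(positive-definiteness forced by $m>0$ via Schur complement), and $\det(T_m) = \det(S_0)\cdot m$. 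Distinct $m$'s give distinct determinants, so $|\pi_F(x,\det)| \ge \#\{m \le x/\det(S_0) : c_{h_{\mu_0}}(m) \nl\}$, with analogous restricted bounds for the finer variants.

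For $(a1)$, $h_{\mu_0}$ has integer weight and \cite[Theorem~1]{js} (or \propref{ellasy}) gives $\gg x/(\log x)^{\alpha}$ with $\alpha=\alpha(h_{\mu_0}) \in (0,3/4]$, whence I would set $\beta(F):=\alpha$. For $(a2)$, $h_{\mu_0}$ is half-integral and \cite{jsh} yields $\gg \sqrt{x}/\log\log x$. Parts $(b)$ and $(c)$ require moreover that $\det(T_m) = \det(S_0)m$ be odd square-free, respectively prime; the cleanest route — and the source of the extra hypotheses $N=1$, $k \ge (n+3)/2$, and (for (b)) $\ell \gg_F 1$ — is to force $\det(S_0)=1$ by taking $S_0 = I_{n-1}$ (which lies in $\Lambda^+_{n-1}$ since $n-1$ is even), so that $\det(T_m)=m$. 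To \emph{exhibit} such an $S_0$ with $\phi_{S_0}\nl$, I would combine \cite[Theorem~1]{bo-da} (infinitely many fundamental $T$ with $a_F(T)\ne 0$, under the present hypotheses on $N,k$) with \thmref{ymodp} of this paper (upgrading to $a_F(T) \nl$ for $\ell$ large in terms of $F$) and then an appropriate $\glnz$ change of variable to bring the $(n-1)\times(n-1)$ block into the shape $I_{n-1}$. Once this is in place, (b) follows from the square-free lower bound \eqref{lubd} of \propref{ellasy} applied to $h_{\mu_0}$ (whose level is then controlled), and (c) follows from Serre's Chebotarev-type density $\#\{p \le y : b_g(p) \nl\} \gg y/\log y$ for the newform $g$ sifted out of $h_{\mu_0}$ via \propref{mainprop}, transported back through \eqref{maineq}.

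The principal obstacle is exactly this last point in $(b)$ and $(c)$: the \emph{coexistence} of $\phi_{S_0}\nl$ with a sharp arithmetic constraint on $\det(S_0)$ is considerably more delicate than merely exhibiting some non-zero Fourier--Jacobi coefficient (as in $(a1)$/$(a2)$), because $\det(T_m)$ is invariant under the $\glnz$ action on $T_m$ and cannot be tuned away. This is the step where \cite[Theorem~1]{bo-da} enters as an essential archimedean input (no new proof of it is produced), and it is also the origin of the threshold "$\ell \gg_F 1$" in $(b)$: $\ell$ must be large enough that the specific fundamental $T_\star$ supplied by \cite{bo-da}, once brought into the canonical shape above, still satisfies $a_F(T_\star) \nl$.
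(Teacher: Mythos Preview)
Your treatment of $(a1)$ and $(a2)$ is correct and matches the paper's argument (the paper also passes to $H(\tau)=h_\mu(4d_{T_0}\tau)$ to land in $M_\kappa(\Gamma_1(16d_{T_0}^2N))$ with integral $q$-expansion, a rescaling you elide but which is routine).

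For $(b)$ and $(c)$, however, there is a genuine gap: your claim that one can arrange $S_0=I_{n-1}$ by a $\glnz$ change of variable is unjustified and in general false. Under $T\mapsto U^tTU$ the lower-right block ranges over Gram matrices $G^tTG$ of primitive rank-$(n-1)$ sublattices, and the minimum such determinant is a $\glnz$-invariant of $T$ which need not equal $1$ (take, e.g., $T=2I_n$). Neither \cite[Theorem~1]{bo-da} nor \thmref{ymodp} produces a $T$ with this property; ``fundamental'' or ``primitive'' says nothing about a sub-block being unimodular. Even granting $S_0=I_{n-1}$, your plan for $(b)$ would then feed $h_{\mu_0}$ into the square-free part of \propref{ellasy}, which requires the level to be square-free --- but the level of $H$ picks up the factor $16\det(2S_0)^2$, so this fails too; and for $(c)$, sifting to a newform via \eqref{maineq} relates $a(\mk g,p)$ to $a(H,\gamma_t\Delta p)$, which are not prime-indexed coefficients of $H$.

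The paper's route for $(b)$ and $(c)$ is quite different: it invokes \cite[Prop.~3.8,~Thm.~1.1]{bo-da} to obtain $T=\begin{psm} n & r/2\\ r^t/2 & T_0\end{psm}$ with $a_F(T)\ne 0$, both $d_T$ and $d_{T_0}$ odd and square-free, and $r$ ``$T_0$-primitive'' (denominator of $T_0^{-1}[r/4]$ exactly $d_{T_0}$); then takes $\ell\gg_F 1$ so that $\ell\nmid a_F(T)$. The resulting $H$ lies in level $\Gamma_1(d_{T_0}^2)$ and, crucially, is supported on indices coprime to $d_{T_0}$. For $(b)$ one then appeals to \cite[Theorem~26]{js}: either some square-free $n$ has $a(H,n)\nl$, and one is done directly; or every such $n$ is divisible by $v^2$ with $v\mid d_{T_0}\ell$, and the support condition forces $v=\ell$, which is ruled out by \thmref{emf}$(a)$. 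Part $(c)$ uses the same set-up but appeals to \cite[Thm.~I]{j} in place of \cite{js}. No attempt is made to force $\det(S_0)=1$; the square-free structure of $d_{T_0}$ (not unimodularity) is what makes the argument go through.
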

In view of \lemref{sing}, the above theorem therefore holds for all suitable $\ell$ such that $\ell -1 \ge k-n$. 

\begin{proof}
Since $F$ is non-singular $\ml$, we first get hold of $T \in \Lambda^+_n$ such that $a_F(T) \nl$, say for concreteness that $\det(T)$ is minimal with this property. 
Let $T_0$ be the right lower diagonal block of $T$ of size $n-1$. We look at the $T_0$-th Fourier-Jacobi coefficient, say $\phi$, of $F$. We consider any of it's theta components, say $h_\mu$ ($\mu \in \mf Z^{n-1}/2T_0\cdot \mf Z^{n-1}$), which is $\nl$. Such a $h_\mu$ exists since $\phi \nl$. It is well-known that $H(\tau) :=h_\mu( 4d_{T_0} \tau)$ is in $\mm$ with $M = 16  d_{T_0}^2N$ and $\kappa:= k - (n-1)/2$. We note that the Fourier coefficients of $H$ and $F$ are related as (for this and the above facts, see e.g., \cite[section~2.3]{bo-da}):
\begin{equation} \label{fh}
a(H, n) = a_F( \begin{pmatrix}
 n /d_{T_0} +  T_0^{-1}[\mu/2] & \mu/2 \\ \mu^t/2 & T_0   \end{pmatrix} ), \q ( {n / d_{T_0}} +  T_0^{-1}[\mu/2] \in \mf N).
\end{equation} 
We now have two avenues. Since $T_0$ is fixed, we can get (in an elementary way) $(a1)$ from \eqref{fh} by applying \propref{ellasy} to $H$ (which is non-singular $\ml$) and taking $\beta(F):= \alpha(H)$ if $n$ is odd. 
However we use the stronger result in \cite[Theorem~1]{js} which holds for all $\ell$. Since $n$ is even in $(a2)$, we apply \cite[Theorem~1]{jsh} to $H$ and are done.

For $(b)$, by combining the results in Proposition~3.8 and Theorem~1.1 of \cite{bo-da} we choose $T = \begin{psm}  n & r/2 \\ r^t/2 & T_0 \end{psm} \in \Lambda^+_n$ to be such that $a_F(T) \ne 0$, $d_T, d_{T_0}$ are odd, square-free and $r$ is `$T_0$-primitive', i.e., the denominator of $T_0^{-1}[r/4]$ is exactly $d_{T_0}$. See \cite[Prop.3.8]{bo-da}. We then choose $\ell$ such that $\ell \nmid a_F(T)$ by requesting $\ell \gg_F 1$ to be large enough, e.g., by using Hecke's bound 

Arguing as in the previous paragraph we get $H$ of level $\Gamma_1(d^2_{T_0})$ such that $H \nl $ and $H$ is related to $F$ by \eqref{fh}. Moreover from \cite{bo-da}, $a(H,n) \ne 0$ only if $(n, d_{T_0})=1$.

If there exists $n \ge 1$ square-free such that $a(H,n) \nl$, then we are done by \cite[Theorem~26]{js}. Otherwise, from the above paragraph and from the condition in (the second half of) \cite[Theorem~26]{js}, we conclude that $a(H,n) \not \equiv 0 \ml$ only for those $n$ which are divisible by $v^2$ with $v \mid d_{T_0} \ell$ for some prime $v$. $v$ cannot divide $d_{T_0}$ as the Fourier expansion of $H$ is supported away from $d_{T_0}$. Thus $v=\ell$ and $(\ell, d_{T_0})=1$.
From \thmref{emf}~$(a)$ (again using that $\ell$ is large enough) we would then have a contradiction unless $H \equiv 0 \ml$. The lower bounds follow from those of $H$, upon using \eqref{fh}.

Finally arguing exactly as in the proof of $(b)$ above, $(c)$ follows from \cite[Thm.~I]{j}. This completes the proof.
\end{proof}

\begin{rmk}
It is not clear whether the arguments of \cite{jsh} can be adapted to deal with square-free Fourier coefficients. In this regard, the method of this paper also does not work for half-integral weight modular forms, since the Hecke operators here are indexed by squares. If one had such a result, then part $(b)$ above would have a version for even $n$ as well.
\end{rmk}

\begin{rmk} \label{lexpl}
The lower bound on $\ell$ in~$(b)$ can be made more explicit if we know bounds on the smallest $d_T$ (say of the form $d_T \ll k^{a_n}N^{b_n}$) for the fundamental $T$ such that $a_F(T) \ne 0$ \textsl{for all} $n$. This seems known only for $n=1$ from \cite{ad2}. 
\end{rmk}

\begin{rmk}
In light of \thmref{smodp}~$(b)$, it may seem that \propref{ymodp} is redundant; but the former is only for odd $n$, whereas the latter is for all $n$. Moreover the lower bound on $\ell$ in \propref{ymodp} is effective only in terms of the weight and level of the concerned space, whereas in the case of \thmref{smodp}~$(b), $ it is dependent on the modular form.
\end{rmk}

We end by noting a nonvanishing result in terms of the trace function.
\begin{prop} \label{strace}
Let $F \in \mnk$ such that $F \nl$. Then for all primes $\mk l$,
\begin{equation}
| \pi_F(x, \tr) | \gg x/(\log x)^{\beta(F)}.
\end{equation}
\end{prop}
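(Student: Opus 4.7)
The plan is to mimic the strategy of the proof of Theorem~\ref{smodp}~$(a1)$: extract from $F$ a nonzero elliptic modular form $H \ml$ via the Fourier--Jacobi expansion, and then apply the quantitative non-vanishing result of \cite{js}. The key feature that makes this work for the trace, and that is shared with the determinant argument, is that the Fourier coefficients of $H$ correspond to a family of half-integral matrices whose upper-left entry -- and hence whose trace -- is an affine function of the summation index.

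First, we may assume $F$ is non-singular $\ml$, since otherwise $\pi_F(x,\tr)$ is empty by its very definition. Fix then some $T \in \Lambda_n^+$ with $a_F(T) \nl$ and write $T_0$ for its lower right $(n-1)\times(n-1)$ block. The Fourier--Jacobi coefficient $\phi_{T_0}$ of $F$ (of type $(1,n-1)$) is $\nl$, and hence its theta decomposition \eqref{jacobi1} contains at least one theta component $h_{\mu_0}$ which is $\nl$. The function
\[
H(\tau) := h_{\mu_0}(4 d_{T_0} \tau) \in M_\kappa(\Gamma_1(16 d_{T_0}^2 N)), \qquad \kappa = k - (n-1)/2,
\]
is then a nonzero elliptic modular form mod $\mk l$.

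Next, the explicit relation \eqref{fh} identifies $a(H,r) = a_F(\tilde T(r))$, where $\tilde T(r)$ has upper-left entry $r/d_{T_0} + T_0^{-1}[\mu_0/2]$ and lower block $T_0$. Consequently
\[
\tr(\tilde T(r)) \;=\; \frac{r}{d_{T_0}} \;+\; T_0^{-1}[\mu_0/2] + \tr(T_0) \;=:\; \frac{r}{d_{T_0}} + C,
\]
a strictly increasing affine function of $r$. In particular the map $r \mapsto \tr(\tilde T(r))$ is injective on $\{r : a(H,r) \nl\}$, so that for every $y > C$,
\[
|\pi_F(y, \tr)| \;\ge\; |\pi(H,\, d_{T_0}(y-C),\, \mk l)|.
\]

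Finally, when $n$ is odd, $\kappa$ is an integer, and \cite[Theorem~1]{js} -- which holds for every prime $\mk l$ -- gives $|\pi(H,Y,\mk l)| \gg Y/(\log Y)^{\alpha(H)}$ for some $0 < \alpha(H) \le 3/4$; combined with the previous display this delivers $|\pi_F(x,\tr)| \gg x/(\log x)^{\beta(F)}$ with $\beta(F) := \alpha(H)$. The main obstacle is the case of even $n$: then $\kappa$ is half-integral, and the analogous result \cite[Theorem~1]{jsh} yields only a bound of the shape $\sqrt{Y}/\log\log Y$, which falls short of the target. Standard tricks to upgrade to integral weight (replacing $H$ by $H^2$, or by $H\cdot \theta$ with $\theta$ a unary theta series) destroy the bijectivity between the summation index and the trace of $\tilde T(\cdot)$, recovering only $\sqrt{y}$-type bounds. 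Closing this gap presumably requires a different reduction -- for instance, starting from a Fourier--Jacobi expansion of type $(2, n-2)$ so as to isolate an \emph{integral} weight object from the outset, to which the same affine-in-index argument can then be applied.
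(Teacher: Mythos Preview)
Your argument is correct for odd $n$ and parallels the proof of \thmref{smodp}~$(a1)$, but as you yourself note, it does not cover even $n$: the theta decomposition produces a form of weight $k-(n-1)/2$, which is half-integral precisely when $n$ is even, and then only the weaker bound from \cite{jsh} is available. Since \propref{strace} is stated for all $n$ and the right-hand side is $x/(\log x)^{\beta(F)}$, this is a genuine gap.

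The paper closes this gap by using a different descent, namely the B\"ocherer--Nagaoka construction recalled after \propref{bo-na} (equations \eqref{Fr}--\eqref{maineq2}), rather than the Fourier--Jacobi/theta decomposition. The point is that the auxiliary form $f$ in \eqref{fdef} lies in $M_k(\Gamma_1(NR^2))$, i.e.\ it has \emph{integral} weight $k$ regardless of the parity of $n$, so \cite[Thm.~1]{js} applies directly for every $\mk l$. Moreover, by \eqref{maineq1} one has
\[
a(f,r)=\sum_{\substack{T\equiv T_0\bmod R\\ \mathrm{diag}(T)=(r,d_2,\ldots,d_n)}} a_F(T),
\]
so any $T$ contributing to a coefficient $a(f,r)\nl$ has $\tr(T)=r+d_2+\cdots+d_n$; this gives exactly the same injective affine map from $\pi(f,x;\mk l)$ into $\pi_F(x+\mathrm{const},\tr)$ that you exploit, and the proof concludes as in your argument. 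Your suggestion of passing to a Fourier--Jacobi expansion of type $(2,n-2)$ is therefore unnecessary; the $(n-1)$-variable restriction built into the B\"ocherer--Nagaoka procedure already keeps the weight integral.

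A minor remark: your opening reduction ``we may assume $F$ is non-singular $\ml$, since otherwise $\pi_F(x,\tr)$ is empty'' does not dispose of the singular case --- an empty set does not satisfy a lower bound of the form $\gg x/(\log x)^{\beta(F)}$. (The paper's proof via \eqref{maineq1} only needs $F\nl$, at the cost that the $T$ it produces need not lie in $\Lambda_n^+$; this is a slight mismatch with the stated definition of $\pi_F(x,\tr)$ that you may want to keep in mind.)
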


\begin{proof}
We apply the procedure discussed after \thmref{ymodp} to $F$. Here the property $\mc P$ is  is that $a_F(T) \nl$. The proof then follows trivially from \eqref{maineq1}, \eqref{maineq2} and \cite[Thm.~1]{js}. If one wishes to settle for more simple minded proof, then \propref{ellasy} may be used, but at the price of $\ell$ being large. In any case we put $\beta(F):= \alpha(f)$, where $f$ is as given in \eqref{fdef}.
\end{proof}

\end{document}